\newtheorem*{theo4.A}{Theorem 4.A}
\newtheorem*{theo4.B}{Theorem 4.B}
\newtheorem*{theo1.A}{Theorem 1.A}
\newtheorem*{theo1.B}{Theorem 1.B}
\newtheorem*{theo2.A}{Theorem 2.A}
\newtheorem*{theo2.B}{Theorem 2.B}
\newtheorem*{theo2.C}{Theorem 2.C}
\newtheorem*{theo2.D}{Theorem 2.D}
\newtheorem*{theo2.E}{Theorem 2.E}
\newtheorem*{theo2.F}{Theorem 2.F}
\newtheorem*{theo3.A}{Theorem 3.A}
\newtheorem*{theo3.B}{Theorem 3.B}
\newtheorem*{ques3.A}{Question 3.A}
\newtheorem*{cor A}{Corollary A}
\newtheorem*{cor B}{Corollary B}
\newtheorem*{exm2A}{Example 2.A}
\newtheorem{theo}{Theorem}[section]
\newtheorem{lem}{Lemma}[section]
\newtheorem{cor}{Corollary}[section]
\newtheorem{exm}{Example}[section]
\newtheorem{rem}{Remark}[section]
\newcommand{\ol}{\overline}
\newcommand{\be}{\begin{equation}}
\newcommand{\ee}{\end{equation}}
\newcommand{\beas}{\begin{eqnarray*}}
\newcommand{\eeas}{\end{eqnarray*}}
\newcommand{\bea}{\begin{eqnarray}}
\newcommand{\eea}{\end{eqnarray}}
\numberwithin{equation}{section}
\begin{document}
\title[P\MakeLowercase {artial sharing and cross sharing}......]{\LARGE
P\Large\MakeLowercase {artial sharing and cross sharing of entire function with its derivative}}
\date{}
\author[S. M\MakeLowercase{ajumder}, N. S\MakeLowercase{arkar} \MakeLowercase{and}  D. P\MakeLowercase{ramanik}]
{S\MakeLowercase{ujoy} M\MakeLowercase{ajumder}, N\MakeLowercase{abadwip} S\MakeLowercase{arkar}$^*$ \MakeLowercase{and} D\MakeLowercase{ebabrata} P\MakeLowercase{ramanik}}
\address{Department of Mathematics, Raiganj University, Raiganj, West Bengal-733134, India.}
\email{sm05math@gmail.com, sjm@raiganjuniversity.ac.in}
\address{Department of Mathematics, Raiganj University, Raiganj, West Bengal-733134, India.}
\email{naba.iitbmath@gmail.com}
\address{Department of Mathematics, Raiganj University, Raiganj, West Bengal-733134, India.}
\email{debumath07@gmail.com}

\renewcommand{\thefootnote}{}
\footnote{2020 \emph{Mathematics Subject Classification}: 30D35, 30D45.}
\footnote{\emph{Key words and phrases}: Entire functions, derivative, Nevanlinna theory, uniqueness and normal family.}
\footnote{*\emph{Corresponding Author}: Nabadwip Sarkar.}
\renewcommand{\thefootnote}{\arabic{footnote}}
\setcounter{footnote}{0}

\begin{abstract} In the paper, we investigate the uniqueness problem of a power of an entire function that share one value partially with it's linear differential polynomial and obtain a result, which improves several previous results in a large scale. Also in the paper we include some applications of our main result. Moreover, we solve the question raised by Wang and Liu (Value cross-sharing of meromorphic functions, Comput. Methods Funct. Theory (2023). https://doi.org/10.1007/s40315-023-00481-9) for a special case related to value cross-sharing.
\end{abstract}
\thanks{Typeset by \AmS -\LaTeX}
\maketitle
\section{{\bf Introduction}}
We assume that the reader is familiar with standard notation and main results of Nevanlinna Theory (see \cite{WKH1, YY1}). We use notations $\rho(f)$ and $\rho_1(f)$ for the order and the hyper-order of $f$ respectively. Let $f$ and $g$ be two non-constant entire functions and $a\in\mathbb{C}$. If $g-a=0$ whenever $f-a=0$, we write $f=a\Rightarrow g=a$ and we say that $f$ and $g$ share $a$ partially. If $f=a\Rightarrow g=a$ and $g=a\Rightarrow f=a$, we then write $f=a\Leftrightarrow g=a$ and we say that $f$ and $g$ share $a$ IM (ignoring multiplicity). If $f-a$ and $g-a$ have the same zeros with the same multiplicities, we write $f=a\rightleftharpoons g=a$ and we say that $f$ and $g$ share $a$ CM (counting multiplicity).\par

\smallskip
We know that in the study of value sharing, it is often assumed that $f(z)$ and $g(z)$, $f^{(k)}(z)$ and $g^{(k)}(z)$, $f(z)$ and $f^{(k)}(z)$, $f(z)$ and $f (z+c)$ share common values, where $k\in\mathbb{N}$ and $c\in\mathbb{C}\backslash \{0\}$ (see \cite{YY1}).
In 2023, Wang and Liu \cite{WL1} introduced a new type of value sharing which is called the value cross-sharing (see also \cite{GLW1}). For example, $f^n(z)$ and $g^{(1)}$ share common values together with $g^n$ and $f^{(1)}$ share common values, $f^n(z)$ and $g^m(z+c)$ share common values together with $g^n(z)$ and $f^m(z +c)$ share common values, where $m,n\in\mathbb{N}$.

\smallskip
Let $f$ be a non-constant entire function. Then the function $\mathscr{L}_k(f)$ defined by
\[\mathscr{L}_k(f)=f^{(k)}+a_{k-1}f^{(k-1)}+\ldots+a_1f^{(1)}+a_0f=\sideset{}{_{i=0}^k}{\sum}a_if^{(i)},\]
where $a_i\in\mathbb{C}$ $i=0,1,\ldots, k$ is called a linear differential polynomial with constant coefficients generated by $f$. In this paper we always assume that $a_k=1$.\par

\smallskip
The research on the normality of families, concerning meromorphic functions, their derivatives and differential polynomials, is an active and fruitful part in value distribution theory. In this paper, we also use the theory of normal family to deal with the uniqueness problem of a power of an entire function that share one value partially with it's linear differential polynomial.

\smallskip
This paper is organized as follows. In Section 2, we investigate the uniqueness problem for a power of an entire function sharing one value partially with it's linear differential polynomial. We also include some applications of our main result. In Section 3, we solve the question due to Wang and Liu \cite{WL1} for a special case.

\section{\bf{Partial-sharing of entire function with its derivative}}

The shared value problems related to a meromorphic function $f(z)$ and its derivative $f^{(k)}(z)$ have been a more widely studied subtopic of the uniqueness theory of entire and meromorphic functions in the field of complex analysis. Meromorphic solutions of complex differential equations and the uniqueness problems of meromorphic functions sharing value (s) or set with their derivatives have become an area of current interest and a hot direction and the study is based on the Nevanlinna value distribution and theory of normal family.
For this background, we see \cite{CS1}, \cite{GLW1, GY1}, \cite{LM1}, \cite{LY1}-\cite{LUY1}, \cite{MD1}, \cite{MSS1}, \cite{MS1}, \cite{MM1}, \cite{SS1}, \cite{RY1,WY1}, \cite{YZ1}, \cite{ZY2, ZY1}.

\smallskip
The uniqueness problem for a power of an entire function sharing a single value with its derivatives was first observed by Yang and Zhang \cite{YZ1} and they proved the following result.

\begin{theo2.A}\cite[Theorem 4.4]{YZ1} Let $f$ be a non-constant entire function and $n\in\mathbb{N}$ such that $n \geq 7$. If $f^{n}$ and $(f^{n})^{(1)}$ share $1$ CM, then $f(z) = c\exp(\frac{z}{n})$, where $c\in\mathbb{C}\backslash \{0\}$. 
\end{theo2.A}

In 2010, Zhang and Yang \cite{ZY1} improved and generalised Theorem 2.A by considering higher order derivatives and by lowering the power of the entire function. We now state two results of Zhang and Yang \cite{ZY1}.

\begin{theo2.B}\cite[Theorem 2.1]{ZY1} Let $f$ be a non-constant entire function and $k, n\in\mathbb{N}$ such that $n \geq k+1$. If $f^{n}$ and $(f^{n})^{(k)}$ share $1$ CM, then  $f(z) = c\exp(\frac{\lambda}{n}z)$, where $c, \lambda\in\mathbb{C}\backslash \{0\}$ such that $\lambda^{k} = 1$. 
\end{theo2.B} 

\begin{theo2.C}\cite[Theorem 3.1]{ZY1} Let $f$ be a non-constant entire function and $k, n\in\mathbb{N}$ such that $n \geq k+2$. If $f^{n}$ and $(f^{n})^{(k)}$ share $1$ IM, then the conclusion of Theorem 2.B holds. 
\end{theo2.C}

\begin{rem} Following examples ensure that the condition ``$n\geq k+1$ in Theorem 2.B is sharp.
\end{rem}
\begin{exm2A}\cite[Example 2.4]{ZY1} Let $f(z)$ be a non-constant solution of $f^{(1)}(z)-1=\exp (z) (f(z)-1)$. Then $f$ and $f^{(1)}$ share $1$ CM, while $f\not\equiv f^{(1)}$.
\end{exm2A}

\begin{exm} Let $f(z)=2\exp (\frac{z}{2})-1$ and $k=n=1$. It is easy to verify that $f$ and $f^{(1)}$ share $1$ CM, but $f\not\equiv f^{(1)}$.
\end{exm}

In the same paper, Zhang and Yang \cite{ZY1} posed the problem of investigating the validity of Theorem 2.C, for $n \geq k+1$. They could prove Theorem 2.C holds also for $n\geq k+1$ but only for the case when $k=1$. We now recall the result.

\begin{theo2.D}\cite[Theorem 3.2]{ZY1} Let $f$ be a non-constant entire function and $n\in\mathbb{N}\backslash \{1\}$. If $f^{n}$ and $(f^{n})^{(1)}$ share $1$ IM, then $f(z)=c\exp(\frac{1}{n}z)$, where $c\in\mathbb{C}\backslash \{0\}$. 
\end{theo2.D}

In 2023, Majumder et al. \cite{MSS1} further generalised Theorem 2.D by replacing $(f^n)^{(1)}$ by $(f^n)^{(k)}$ and obtained the following result.

\begin{theo2.E}\cite[Theorem 4.1]{MSS1} Let $f$ be a transcendental entire function and $k,n\in\mathbb{N}$. If $f^{n}$ and $(f^{n})^{(k)}$ share $1$ IM, then the conclusion of Theorem 2.B holds.
\end{theo2.E}

\smallskip
The research in this topic has been extended in the following directions:
\begin{enumerate}
\item[(i)] One replaces the shared value by a non-constant function;
\item[(ii)] One replaces sharing CM by partially sharing.
\end{enumerate}

\smallskip
Recently, the present first author and Sarkar \cite{MS1} have extended Theorem 2.D by relaxing the nature of sharing with the idea of ``partially'' sharing value. In fact, they proved the following result.

\begin{theo2.F}\cite[Theorem 1.1]{MS1} Let $f$ be a non-constant entire function and $k,n\in\mathbb{N}$ such that $n\geq k+1$. If $f^{n}=1\Rightarrow (f^{n})^{(k)}=1$, then one of the following cases holds:
\begin{enumerate}
\item[(1)]  $f(z)=c\exp\left(\frac{\lambda }{n}z\right)$, where $c,\lambda \in\mathbb{C}\backslash \{0\}$ such that $\lambda^k=1$.
\item[(2)] $n=2$ and $f(z)=c_0\exp\left(\frac{1}{4}z\right)+c_1$, where $c_0,c_1\in \mathbb{C}\backslash \{0\}$ such that $c_1^2=1$.
\end{enumerate}
\end{theo2.F}

Since a linear differential polynomial is an extension of a derivative, in this paper we consider the problem of extending Theorem 2.F to the linear differential polynomial. We now state our main result.

\begin{theo}\label{t1} Let $f$ be a non-constant entire function and $k, n\in\mathbb{N}$ such that $n \geq k+1$. If $f^{n}=1\Rightarrow \mathscr{L}_k(f^n)=1$, then only one of the following cases holds: 
\begin{enumerate}  
\item[(1)] $f(z)=c\exp (\frac{\lambda}{n}z)$, where $c, \lambda\in\mathbb{C}\backslash \{0\}$ such that $\lambda^{k} = 1$,

\smallskip
\item[(2)] $n=2$ and $f(z)=c_0\exp (\frac{1}{4} z)+c_1$, where $c_0, c_1\in\mathbb{C}\backslash \{0\}$ such that $c_1^2=1$, 

\smallskip
\item[(3)] $(a_0,a_1,\ldots, a_{k-1})\neq(0,0,\ldots,0)$, $n=k+1$, $k\geq 2$ and 
$f(z)=c_0\exp (\lambda z)+c_1$, where $c_0, c_1\in\mathbb{C}\backslash \{0\}$ such that 
where $c_1^{k+1}=1$ and $\lambda^k=(-1)^{k+1}\frac{1}{(k+1)(k+1)!}$,

\smallskip
\item [(4)] $a_0=1$ and $f(z)=A\exp (\frac{\lambda}{n} z)$, where $A, \lambda\in\mathbb{C}\backslash \{0\}$ such that $\lambda^{k-1}+a_{k-1}\lambda^{k-2}+\ldots+a_1=0$,

\smallskip
\item [(5)] $a_0\neq 1$ and $f(z)=A\exp (\frac{\lambda}{n} z)$, where $A, \lambda\in\mathbb{C}\backslash \{0\}$ such that $\lambda^k+a_{k-1}\lambda^{k-1}+\ldots+a_1\lambda+a_0=1$.
\end{enumerate}
\end{theo}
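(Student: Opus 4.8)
The plan is to set $F=f^{n}$, $G=\mathscr{L}_{k}(F)$ and to base everything on the entire function $\beta:=G-F=\mathscr{L}_{k}(f^{n})-f^{n}$. Two local remarks will drive the argument. First, if $F(z_{0})=1$ then the hypothesis $f^{n}=1\Rightarrow\mathscr{L}_{k}(f^{n})=1$ gives $G(z_{0})=1$, so $\beta(z_{0})=0$; hence every zero of $f^{n}-1$ is a zero of $\beta$, and these zeros avoid the zeros of $F$. Second, at a zero of $f$ of order $p$ the function $F$ vanishes to order $np\ge n\ge k+1>k$, so $G(z_{0})=\mathscr{L}_{k}(F)(z_{0})=0$ and, as $a_{k}=1$, $\beta$ vanishes there to order exactly $np-k$. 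From these I get, whenever $\beta\not\equiv0$, that the meromorphic function $R:=\beta/F$ satisfies $m(r,R)=S(r,f)$ (logarithmic derivative lemma, using $m(r,F^{(i)}/F)=S(r,f)$ and $T(r,F)=nT(r,f)$) and has a pole of order exactly $k$ at each zero of $f$, so that $T(r,R)=k\,\overline{N}(r,1/f)+S(r,f)\le kT(r,f)+S(r,f)$.

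The first step is a Nevanlinna inequality. I would apply the Second Main Theorem to the entire function $f$ at the $n$ distinct $n$-th roots of unity $\zeta_{1},\dots,\zeta_{n}$, giving $(n-1)T(r,f)\le\sum_{j}\overline{N}(r,1/(f-\zeta_{j}))+S(r,f)=\overline{N}(r,1/(f^{n}-1))+S(r,f)$. If $\beta\not\equiv0$, the first remark shows the zeros of $f^{n}-1$ are zeros of $R$, so $\overline{N}(r,1/(f^{n}-1))\le N(r,1/R)\le T(r,R)+O(1)\le kT(r,f)+S(r,f)$, whence $(n-1-k)\,T(r,f)\le S(r,f)$. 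For transcendental $f$ this forces $n=k+1$. A non-constant polynomial $f$ is excluded by the parallel degree count: with $d=\deg f$, when $\beta\not\equiv0$ one has $\deg\beta\le nd$, yet $\beta$ vanishes at the $\ge(n-1)d+1$ distinct zeros of $f^{n}-1$ and to order $np-k$ at each zero of $f$, forcing $\deg\beta\ge d(2n-k-1)+1>nd$; the sub-case $\beta\equiv0$ for polynomial $f$ is killed by comparing leading terms in the ODE below. So I may assume $f$ is transcendental and that either $\beta\equiv0$, or $\beta\not\equiv0$ and $n=k+1$ with all the displayed inequalities sharp up to $S(r,f)$.

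Next I treat $\beta\equiv0$. Then $F=f^{n}$ solves the constant-coefficient ODE $F^{(k)}+a_{k-1}F^{(k-1)}+\dots+a_{1}F'+(a_{0}-1)F=0$, so $F=\sum_{i}p_{i}(z)e^{\mu_{i}z}$ and the monic constant-coefficient operator of least order annihilating $F$ has some order $N\le k$. If $F$ had a zero, it would have order $\ge n\ge k+1>N$, so $F,F',\dots,F^{(N-1)}$ would all vanish there, and the minimal annihilator would then force all derivatives of $F$ to vanish at that point, i.e.\ $F\equiv0$, which is absurd. Hence $f$ is zero-free; $F$ is not a polynomial (else $\deg F=nd\ge k+1$, and comparing leading terms in the ODE makes $f$ constant), so $\rho(f)=\rho(F)=1$ and $f=Ae^{az}$ with $A,a\in\mathbb{C}\setminus\{0\}$. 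Substituting $F=A^{n}e^{naz}$ gives, with $\lambda:=na\ne0$, the identity $\lambda^{k}+a_{k-1}\lambda^{k-1}+\dots+a_{1}\lambda+a_{0}=1$: this is conclusion (5) if $a_{0}\ne1$ (and conclusion (1) in the sub-case $\lambda^{k}=1$, e.g.\ when $a_{0}=\dots=a_{k-1}=0$), while if $a_{0}=1$ I divide by the nonzero $\lambda$ to get $\lambda^{k-1}+a_{k-1}\lambda^{k-2}+\dots+a_{1}=0$, i.e.\ conclusion (4).

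What remains is $\beta\not\equiv0$ with $n=k+1$, and this is where the real work lies. Sharpness of the chain in the first step gives $N(r,1/f)=\overline{N}(r,1/f)+S(r,f)=T(r,f)+S(r,f)$ (so $f$'s zeros are essentially all simple), $\sum_{j}\overline{N}(r,1/(f-\zeta_{j}))=(n-1)T(r,f)+S(r,f)$, $T(r,R)=kT(r,f)+S(r,f)$, and $R$ has vanishing deficiency at $0$. The plan from here is to combine this deficiency data with the precise local form of $R=\beta/f^{n}$ at the zeros of $f$ and of $f^{n}-1$, together with a normal-family (rescaling) argument in the spirit of \cite{MS1}, to force $f$ to have the shape $f(z)=c_{0}e^{\lambda z}+c_{1}$ with $c_{0},c_{1}\in\mathbb{C}\setminus\{0\}$ (for $k=1$ and $a_{0}=0$ this reduces to Theorem~2.F). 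Finally I would substitute $f(z)=c_{0}e^{\lambda z}+c_{1}$ back into the sharing hypothesis and, viewing $f^{n}-1$ and $\mathscr{L}_{k}(f^{n})-1$ as polynomials in $t=e^{\lambda z}$, read off that the divisibility forced by the partial sharing can hold only when $c_{1}^{k+1}=1$ and $\lambda^{k}=(-1)^{k+1}/\bigl((k+1)(k+1)!\bigr)$, which is conclusion (3) when $k\ge2$ and conclusion (2) when $k=1$. I expect the \textbf{main obstacle} to be exactly the passage from the deficiency/normality information to the exact form $c_{0}e^{\lambda z}+c_{1}$: one has to track how the multiplicities of the zeros of $f^{n}-1$ embed into those of $\mathscr{L}_{k}(f^{n})-1$ and to exclude degenerate limit functions in the rescaling argument, and that is the technical core of the proof.
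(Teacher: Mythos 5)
Your preliminary reductions are sound, and in places cleaner than the paper's: the dichotomy $\beta\equiv 0$ versus $\beta\not\equiv 0$ is exactly the paper's dichotomy $\varphi\equiv 0$ versus $\varphi\not\equiv 0$ for $\varphi=F^{(1)}(F-G)/\bigl(F(F-1)\bigr)$; your second main theorem argument at the $n$-th roots of unity combined with $T(r,\beta/F)\leq k\,T(r,f)+S(r,f)$ gives $n=k+1$ by a legitimate alternative route; and your treatment of $\beta\equiv 0$ (zero-freeness of $F$ via uniqueness for the order-$k$ constant-coefficient ODE, hence $f=A\exp(\lambda z/n)$ and the characteristic relation yielding conclusions (1), (4), (5)) matches the paper's Case 2. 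But the proof stops where the theorem actually lives. In the remaining case ($\beta\not\equiv 0$, $n=k+1$, $f$ transcendental) you only state a ``plan'': the deficiency identities you extract from sharpness of your inequality chain ($\overline{N}(r,0;f)=T(r,f)+S(r,f)$, $\delta(0,R)=0$, etc.) do not by themselves force $f(z)=c_0e^{\lambda z}+c_1$, and you acknowledge yourself that the passage from this data to the exact form of $f$ is an unresolved obstacle. That passage is precisely the technical core, and it is exactly what produces conclusions (2) and (3) and excludes all other behaviour, so as it stands the theorem is not proved.

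Concretely, what is missing is the content of the paper's Lemma 2.4 together with Case 1 of its proof of Theorem 2.1: (i) a normal-family argument (Zalcman rescaling applied to the translates $F(\omega+z)$ of $F=f^{k+1}$, with a second rescaling $H_n/\sqrt{\rho_n}$ to rule out the limit having zeros) showing that the partial sharing forces $\rho(f)\leq 1$; (ii) with $\rho(f)\leq 1$, the Gol'dberg--Grinshtein type estimate $m\bigl(r,f^{(k)}/f\bigr)=o(\log r)$ upgrades $\varphi$ from ``small'' to a nonzero \emph{constant} $c$, so that $F^{(1)}(F-\mathscr{L}_k(F))=cF(F-1)$ holds identically; and (iii) a careful analysis of this differential equation: all zeros of $f$ are simple, $F=0\Rightarrow F^{(k+1)}=c/(k+1)$, a dichotomy on whether $F-1$ has multiple zeros, with the multiple-zero branch eliminated by a sequence of algebraic identities, and the simple-zero branch forcing $f^{(1)}$ zero-free, hence $f^{(1)}=d_0e^{\lambda z}$, after which comparison of exponential terms pins down $c=\lambda$, $d_1^{k+1}=1$ and $\lambda^k=(-1)^{k+1}/\bigl((k+1)(k+1)!\bigr)$, i.e.\ conclusions (2) and (3) (and shows that for $k\geq 2$ with all $a_i=0$ this branch cannot occur, which is why Corollary 2.1 has no exceptional case). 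None of this is supplied or replaced by an alternative argument in your proposal, so there is a genuine gap rather than merely a different route.
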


If we add the condition $(f^{k+1})^{(k)}=1\Rightarrow f^{k+1}=1$ in Theorem \ref{t1}, then the conclusions $(2)$ and $(3)$ do not rise. 
Also it is easy to get the following corollary from Theorem \ref{t1}.

\begin{cor}\label{cc1} Let $f$ be a non-constant entire function and $k, n\in\mathbb{N}$ such that $k\geq 2$ and $n \geq k+1$. If $f^{n}=1\Rightarrow (f^{n})^{(k)}=1$, then  $f(z) = c\exp\left(\frac{\lambda}{n}z\right)$, where $c, \lambda\in\mathbb{C}\backslash \{0\}$ such that $\lambda^{k} = 1$.
\end{cor}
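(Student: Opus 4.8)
The plan is to deduce this corollary directly from Theorem \ref{t1} by specialisation. The first step is to observe that choosing $a_0 = a_1 = \cdots = a_{k-1} = 0$ in the definition of $\mathscr{L}_k$ (recall the standing convention $a_k = 1$) gives $\mathscr{L}_k(f^n) = (f^n)^{(k)}$; hence the hypothesis $f^n = 1 \Rightarrow (f^n)^{(k)} = 1$ is precisely the hypothesis of Theorem \ref{t1}. Since moreover $n \geq k+1$ is assumed, Theorem \ref{t1} applies verbatim, so $f$ must fall into one of its five listed cases, and it only remains to determine which of these can actually occur under the additional assumptions $k \geq 2$, $n \geq k+1$ and $a_0 = \cdots = a_{k-1} = 0$.

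The second step is to eliminate cases $(2)$, $(3)$ and $(4)$ of Theorem \ref{t1}. Case $(2)$ requires $n = 2$, but $k \geq 2$ together with $n \geq k+1$ forces $n \geq 3$, so this case is vacuous; this elementary incompatibility is really the only thing one has to check, and it is also exactly where the hypothesis $k \geq 2$ is used. Case $(3)$ explicitly requires $(a_0, \ldots, a_{k-1}) \neq (0, \ldots, 0)$, which is false here, and case $(4)$ requires $a_0 = 1$, contradicting $a_0 = 0$. So none of these three cases can arise.

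The final step is to read off the conclusion from the two remaining cases. Case $(1)$ already states $f(z) = c\exp(\tfrac{\lambda}{n}z)$ with $c,\lambda \in \mathbb{C}\setminus\{0\}$ and $\lambda^k = 1$, which is exactly what is claimed. In case $(5)$, substituting $a_0 = a_1 = \cdots = a_{k-1} = 0$ collapses the constraint $\lambda^k + a_{k-1}\lambda^{k-1} + \cdots + a_1\lambda + a_0 = 1$ to $\lambda^k = 1$, so once again $f(z) = A\exp(\tfrac{\lambda}{n}z)$ with $\lambda^k = 1$, and writing $c = A$ completes the argument. I do not anticipate any genuine obstacle: all the analytic work is contained in Theorem \ref{t1}, and the corollary is purely a matter of case bookkeeping, the only subtlety being the arithmetic remark that $k \geq 2$ and $n \geq k+1$ exclude the exceptional value $n = 2$ that produces case $(2)$.
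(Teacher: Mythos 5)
Your proposal is correct and is essentially the paper's own route: the paper simply remarks that the corollary follows easily from Theorem \ref{t1}, and your specialisation $a_0=\cdots=a_{k-1}=0$ together with the elimination of cases $(2)$ (via $n\geq k+1\geq 3$), $(3)$ and $(4)$, and the collapse of case $(5)$ to $\lambda^k=1$, is exactly the intended bookkeeping.
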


\begin{rem} Clearly Corollary \ref{cc1} improves Theorems 2.A-2.F in a large scale.
\end{rem}

Now we exhibit the following example to show that the condition ``$n\geq k+1$'' in Theorem \ref{t1} is sharp.

\begin{exm} Let $\mathscr{L}_k(f)=f^{(k)}$, where $k=n=1$ and $f(z)=\exp (\frac{z}{2})+2\exp(\frac{z}{4})+1$. It is easy to verify that $f=1\Rightarrow f^{(1)}=1$, but $f$ does not satisfy any case of Theorem \ref{t1}.
\end{exm}

\medskip
Observing Theorem \ref{t1}, we see that the problem of the entire function $g$ and its $k$-th derivative sharing one value $a$ is related to the problem of the non-linear differential equation
 \[g^{(1)}(g-\mathscr{L}_k(g))-\varphi g(g-a)=0\]
 having a non-constant entire solution, where $\varphi$ is an entire function. In general, it is difficult to judge whether the differential equation has a non-constant solution. However for the very special case $g=f^n$, where $n\in\mathbb{N}$, we can solve the equation completely.

As the applications of Theorem \ref{t1}, we now present the following results.

\begin{theo}\label{tt1} Let $\varphi$ be an entire function and $k, n\in\mathbb{N}$. Suppose $F$ is a non-constant meromorphic solution of the following non-linear differential equation 
\bea\label{ee1}\label{rm.1} F^{(1)}\left(F-\mathscr{L}_k(F)\right)-\varphi F(F-1)=0,\eea
where $F=f^n$ and $n\geq k+1$. Then the conclusions of Theorem \ref{t1} hold.
\end{theo}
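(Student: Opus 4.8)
The plan is to reduce Theorem~\ref{tt1} to Theorem~\ref{t1}: I would show that any meromorphic solution $F=f^n$ of \eqref{ee1} with $n\geq k+1$ is necessarily entire and satisfies $f^n=1\Rightarrow\mathscr{L}_k(f^n)=1$, after which Theorem~\ref{t1} (applicable since $F=f^n$ non-constant forces $f$ non-constant) immediately yields the five listed cases. First I would dispose of the trivial case $\varphi\equiv 0$: then \eqref{ee1} becomes $F^{(1)}\big(F-\mathscr{L}_k(F)\big)\equiv 0$, and as $F$ is non-constant we have $F^{(1)}\not\equiv 0$, so $\mathscr{L}_k(F)\equiv F$; this is a homogeneous constant-coefficient linear ODE, hence $F$, and therefore $f$, is entire, and the hypothesis of Theorem~\ref{t1} holds. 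So from now on I would assume $\varphi\not\equiv 0$, in which case also $F^{(1)}\big(F-\mathscr{L}_k(F)\big)\not\equiv 0$, for otherwise \eqref{ee1} would give $F(F-1)\equiv 0$, impossible for a non-constant $F$.

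Next I would show that $F$ is entire by a pole count. If $F$ had a pole of order $p\geq 1$ at a point $z_0$, then, since $a_k=1$ and $k\geq 1$, the dominant term of $\mathscr{L}_k(F)$ near $z_0$ is $F^{(k)}$, so $\mathscr{L}_k(F)$ — and with it $F-\mathscr{L}_k(F)$ — has a pole of order exactly $p+k$ there; hence the left-hand side of \eqref{ee1} has a pole of order $(p+1)+(p+k)=2p+k+1$ at $z_0$. But $F(F-1)$ has a pole of order $2p$ and $\varphi$ is entire, so the right-hand side has a pole of order at most $2p<2p+k+1$ — a contradiction. Thus $F=f^n$ is entire, and therefore so is $f$.

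Then I would check the partial-sharing condition. Let $z_0$ be a zero of $F-1$ of multiplicity $m$, so near $z_0$ we may write $F(z)-1=c(z-z_0)^m+\cdots$ with $c\neq 0$ and $F^{(1)}(z)=cm(z-z_0)^{m-1}+\cdots$. If $m=1$, then $F^{(1)}(z_0)\neq 0$; evaluating \eqref{ee1} at $z_0$, where the right-hand side vanishes since $F(z_0)=1$, gives $F^{(1)}(z_0)\big(1-\mathscr{L}_k(F)(z_0)\big)=0$, whence $\mathscr{L}_k(F)(z_0)=1$. If $m\geq 2$, then $F^{(1)}(z_0)=0$; supposing $\mathscr{L}_k(F)(z_0)\neq 1$, the decomposition $F-\mathscr{L}_k(F)=(F-1)-(\mathscr{L}_k(F)-1)$ shows that $F-\mathscr{L}_k(F)$ does not vanish at $z_0$, so the left-hand side of \eqref{ee1} has a zero of order exactly $m-1$ at $z_0$, whereas the right-hand side has a zero of order at least $m$ there (since $F(z_0)\neq 0$); as $m-1<m$ this is impossible, and so again $\mathscr{L}_k(F)(z_0)=1$. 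This establishes $f^n=1\Rightarrow\mathscr{L}_k(f^n)=1$, and Theorem~\ref{t1} then gives the conclusion.

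The step needing the most care is the multiple-zero case $m\geq 2$: the crucial observation is that $F^{(1)}$ vanishes at such a point to strictly smaller order ($m-1$) than $F-1$ does ($m$), and it is the resulting mismatch with the vanishing order of the right-hand side of \eqref{ee1} that rules out $\mathscr{L}_k(F)(z_0)\neq 1$. The pole count establishing entireness and the final appeal to Theorem~\ref{t1} are routine by comparison.
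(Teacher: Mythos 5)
Your proposal is correct and follows essentially the same route as the paper: in both, the case $\varphi\equiv 0$ gives $\mathscr{L}_k(F)\equiv F$ and entirety by a pole count, while for $\varphi\not\equiv 0$ one shows $F$ is entire and that at every zero of $F-1$ the order of vanishing of $F^{(1)}$ (namely $m-1$) forces $F-\mathscr{L}_k(F)$ to vanish there, i.e.\ $F=1\Rightarrow\mathscr{L}_k(F)=1$, so that the situation reduces to Theorem \ref{t1}. The only cosmetic difference is that the paper re-runs the corresponding cases of the proof of Theorem \ref{t1} (to record that $\varphi\not\equiv 0$ yields only conclusions (2)--(3), which is later used for Theorem \ref{tt2}), whereas you invoke Theorem \ref{t1} as a black box, which fully suffices for the statement as written.
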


\begin{theo}\label{tt2} Let $\varphi$ be a non-zero entire function and $k, n\in\mathbb{N}$. Suppose $F$ is a non-constant meromorphic solution of (\ref{ee1}), where $F=f^n$ and $n\geq k+1$. Then $n=k+1$ and only one of the following cases holds: 
\begin{enumerate}  
\item[(1)] $n=2$ and $f(z)=c_0\exp (\frac{1}{4} z)+c_1$, where $c_0, c_1\in\mathbb{C}\backslash \{0\}$ such that $c_1^2=1$,

\smallskip
\item[(2)] $(a_0,a_1,\ldots, a_{k-1})\neq(0,0,\ldots,0)$, $n=k+1$, $k\geq 2$ and 
$f(z)=c_0\exp (\lambda z)+c_1$, where $c_0, c_1\in\mathbb{C}\backslash \{0\}$ such that 
where $c_1^{k+1}=1$ and $\lambda^k=(-1)^{k+1}\frac{1}{(k+1)(k+1)!}$.
\end{enumerate}
\end{theo}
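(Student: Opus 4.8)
\emph{Proof plan for Theorem \ref{tt2}.} The strategy is to obtain Theorem \ref{tt2} from Theorem \ref{tt1} by discarding, under the extra hypothesis $\varphi\not\equiv 0$, the cases of Theorem \ref{t1} in which $f$ is a pure exponential. First I would invoke Theorem \ref{tt1}: as $F=f^{n}$ with $n\geq k+1$ is a non-constant meromorphic solution of (\ref{ee1}) and $\varphi$ is entire, one of the cases (1)--(5) of Theorem \ref{t1} holds. Cases (2) and (3) already satisfy $n=k+1$ --- in case (2), $n=2$ together with $n\geq k+1$ forces $k=1$, hence $n=k+1$, while case (3) asserts $n=k+1$ --- and they are word for word the cases (1) and (2) in the conclusion of Theorem \ref{tt2}. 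Thus the whole task reduces to proving that cases (1), (4) and (5) of Theorem \ref{t1} are incompatible with $\varphi$ being a non-zero entire function.

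In each of cases (1), (4), (5) we have $f(z)=A\exp\bigl(\tfrac{\lambda}{n}z\bigr)$ with $A,\lambda\in\mathbb{C}\backslash\{0\}$, so that $F=f^{n}=A^{n}e^{\lambda z}$. Hence $F^{(i)}=\lambda^{i}F$ for every $i$, and therefore $\mathscr{L}_k(F)=P(\lambda)\,F$ with $P(t)=t^{k}+a_{k-1}t^{k-1}+\dots+a_1t+a_0$. Substituting into (\ref{ee1}) and cancelling the common factor $F$ (legitimate since $F\not\equiv 0$), I obtain
\[\bigl(\lambda\,(1-P(\lambda))-\varphi\bigr)F+\varphi=0,\qquad\text{that is}\qquad \varphi\,(1-F)=-\lambda\,(1-P(\lambda))\,F,\]
an identity between entire functions.

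Since $\lambda\neq 0$, the non-constant entire function $1-F=1-A^{n}e^{\lambda z}$ has zeros. Evaluating the identity above at a zero $z_0$ of $1-F$, where $F(z_0)=1$, gives $0=-\lambda\,(1-P(\lambda))$, so $\lambda\,(1-P(\lambda))=0$. The identity then collapses to $\varphi\,(1-F)\equiv 0$, and since $1-F\not\equiv 0$ this forces $\varphi\equiv 0$, contrary to the hypothesis. Therefore cases (1), (4) and (5) of Theorem \ref{t1} cannot occur, only cases (2) and (3) remain, both satisfy $n=k+1$, and they are exactly the two cases listed in Theorem \ref{tt2}.

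Granted Theorem \ref{tt1}, this argument is short and mostly routine. The points that need a little care are: checking that all three discarded cases are genuinely of the exponential shape $A\exp(\tfrac{\lambda}{n}z)$ with $\lambda\neq 0$ (so that $1-F$ actually has a zero at which to test the identity), and verifying that the two surviving cases both force $n=k+1$ and coincide with the stated cases. The substantive difficulty is not here but in Theorem \ref{tt1} --- showing that (\ref{ee1}) forces $F$ to be entire and $f^{n}=1\Rightarrow\mathscr{L}_k(f^{n})=1$, so that Theorem \ref{t1} becomes applicable --- and that result is available to us.
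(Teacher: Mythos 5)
Your argument is correct, but it is organized differently from the paper's. The paper disposes of Theorem \ref{tt2} in one line: since $\varphi\not\equiv 0$, one is in Case 1 of the proofs of Theorems \ref{tt1} and \ref{t1} (where the auxiliary function $F^{(1)}(F-\mathscr{L}_k(F))/F(F-1)$ is exactly the $\varphi$ of (\ref{ee1})), and that case alone forces $n=k+1$ and produces only conclusions (2) and (3) of Theorem \ref{t1}; conclusions (1), (4), (5) arise only in Case 2, i.e.\ when $F\equiv\mathscr{L}_k(F)$, which is precisely $\varphi\equiv 0$. You instead use Theorem \ref{tt1} purely as a black box and eliminate cases (1), (4), (5) a posteriori: in each of them $F=A^{n}e^{\lambda z}$ is zero-free, so substituting into (\ref{ee1}) and cancelling $F$ gives $\lambda\bigl(1-P(\lambda)\bigr)F=\varphi(F-1)$ with $P(t)=t^{k}+a_{k-1}t^{k-1}+\cdots+a_{0}$; evaluating at a $1$-point of $F$ (which exists since $F$ is a non-constant zero-free exponential) yields $P(\lambda)=1$, whence $\varphi(F-1)\equiv 0$ and $\varphi\equiv 0$, a contradiction. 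In fact, in each of (1), (4), (5) the stated conditions on $\lambda$ already give $P(\lambda)=1$, i.e.\ $\mathscr{L}_k(F)\equiv F$, so your evaluation step merely re-derives this; that is harmless and even makes your argument independent of verifying those algebraic identities case by case. Your handling of the surviving cases is also right: case (2) together with $n\geq k+1$ forces $k=1$ and hence $n=k+1$, and case (3) asserts $n=k+1$ outright. The trade-off is that your route works entirely at the level of the stated theorems and a short explicit computation, which is more self-contained and easier to check, whereas the paper's route is shorter because it reuses the internal case structure of the proofs of Theorems \ref{t1} and \ref{tt1}.
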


Now from Theorems \ref{tt1} and \ref{tt2}, we immediately obtain the following corollary.

\begin{cor}\label{c2} Let $\varphi$ be a non-constant entire function and $k, n\in\mathbb{N}$ such that $n>k+1$. Then the non-linear differential equation (\ref{ee1}), where $F=f^n$ has no solutions. 
\end{cor}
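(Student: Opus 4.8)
The plan is to obtain Corollary \ref{c2} as an immediate consequence of Theorems \ref{tt1} and \ref{tt2}, arguing by contradiction; no fresh computation is needed. Suppose that for some $k,n\in\mathbb{N}$ with $n>k+1$ the equation (\ref{ee1}) has a non-constant meromorphic solution $F=f^n$. The only observation required is that a non-constant entire function is in particular \emph{not} the zero function, so the hypothesis ``$\varphi$ non-constant'' of the corollary is stronger than the hypothesis ``$\varphi$ a non-zero entire function'' appearing in Theorem \ref{tt2}; and of course $n>k+1$ implies $n\geq k+1$. Hence all the hypotheses of Theorem \ref{tt2} are met for $F$.

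But Theorem \ref{tt2} asserts that any such solution forces $n=k+1$, which contradicts $n>k+1$. Therefore no solution $F=f^n$ exists, which is precisely the statement of the corollary.

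If one prefers to use only Theorem \ref{tt1}, the same conclusion follows after a brief case analysis. Theorem \ref{tt1} confines $F=f^n$ to one of the five forms listed in Theorem \ref{t1}. In cases $(1)$, $(4)$ and $(5)$ the function $f$ is a pure exponential, so $\mathscr{L}_k(F)=P\,F$ for some constant $P$, and (\ref{ee1}) reduces, after cancelling the common factor $F$, to $(1-P)\,F^{(1)}=\varphi\,(F-1)$; since $F-1$ has zeros while $F$ and $F^{(1)}$ have none, $\varphi$ can be entire only if $1-P=0$, and then $\varphi\equiv0$, so $\varphi$ is constant, contrary to hypothesis. In cases $(2)$ and $(3)$ one has $n=k+1$ (case $(3)$ states this, and in case $(2)$ the equality $n=2$ combined with $n\geq k+1$ forces $k=1$), again contradicting $n>k+1$. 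Thus in every case the hypotheses of the corollary cannot hold, and the result follows.

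In short, there is no genuine obstacle here: the whole substance of Corollary \ref{c2} has already been absorbed into Theorems \ref{tt1} and \ref{tt2}, and the only step calling for a moment's thought is realizing that ``non-constant'' is exactly the strengthening of ``non-zero'' that lets us invoke Theorem \ref{tt2} directly.
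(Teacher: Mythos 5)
Your argument is correct and is essentially the paper's: the paper derives Corollary \ref{c2} immediately from Theorems \ref{tt1} and \ref{tt2}, exactly as you do by noting that a non-constant entire $\varphi$ is in particular non-zero, so Theorem \ref{tt2} forces $n=k+1$, contradicting $n>k+1$. Your supplementary case analysis via Theorem \ref{tt1} is a harmless (and correct) extra, but the main deduction matches the intended proof.
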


Following example shows that the condition ``$n\geq k+1$'' in Theorem \ref{tt1} is sharp.

\begin{exm} Let $\mathscr{L}_4(f)=f^{(4)}-f^{(3)}-f^{(2)}+f^{(1)}+f$, $n=1$ and $\varphi=0$. Clearly $f(z)=\exp(z)+\exp(-z)$ satisfies the differential equation (\ref{ee1}), but $f$ does not satisfy any case of Theorem \ref{tt1}.
\end{exm}

\medskip
\subsection{{\bf Auxiliary Lemmas}} Let $\mathcal{F}$ be a family of meromorphic functions in a domain $D\subset \mathbb{C}$. We say that $\mathcal{F}$ is normal in $D$ if every sequence $\{f_{n}\}_{n}\subseteq \mathcal{F}$ contains a subsequence which converges spherically and uniformly on the compact subsets of $D$ (see \cite{JS1}). That the limit function is either meromorphic in $D$ or identically equal to $\infty$.

Let $f$ be a meromorphic function. The spherical derivative of $f$ at $z\in\mathbb{C}$ is defined by
\beas f^{\#}(z)=\frac{|f^{(1)}(z)|}{1+|f(z)|^{2}}.\eeas

The following result is the well known Marty's Criterion.
\begin{lem}\label{l2.1}\cite{JS1} A family $\mathcal{F}$ of meromorphic functions on a domain $D$ is normal if and only if for each compact subset $K\subseteq D$, $\exists$ $M\in\mathbb{R}^+$ such that $f^{\#}(z)\leq M$ $\forall$ $f\in\mathcal{F}$ and $z\in K$.
\end{lem}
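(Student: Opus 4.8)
The plan is to prove both implications of Marty's criterion by working with the chordal (spherical) metric $\sigma$ on the Riemann sphere $\widehat{\mathbb{C}}$, using the fact that $f^{\#}(z)\,|dz|$ is precisely the pullback under $f$ of the spherical length element, together with the Arzel\`a--Ascoli theorem in the compact metric space $(\widehat{\mathbb{C}},\sigma)$.

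For the sufficiency direction, suppose $f^{\#}$ is locally uniformly bounded on $D$. Fix a compact $K\subseteq D$ and a compact neighbourhood $K'$ of $K$ in $D$ on which $f^{\#}\leq M$ for all $f\in\mathcal{F}$. For any two points $z_1,z_2\in K$ that are close enough to be joined by a short path $\gamma\subseteq K'$, one has $\sigma(f(z_1),f(z_2))\leq\int_{\gamma}f^{\#}(z)\,|dz|\leq M\,\mathrm{length}(\gamma)$ for every $f\in\mathcal{F}$; this is exactly $\sigma$-equicontinuity of $\mathcal{F}$ on $K$. Since $(\widehat{\mathbb{C}},\sigma)$ is compact, Arzel\`a--Ascoli gives that every sequence in $\mathcal{F}$ has a subsequence converging uniformly on compact subsets of $D$ in the $\sigma$-metric, and a standard Weierstrass-type localization argument shows such a limit is either meromorphic on $D$ or identically $\infty$. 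Hence $\mathcal{F}$ is normal.

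For the necessity direction I argue by contradiction: if the bound fails on some compact $K$, pick $f_j\in\mathcal{F}$ and $z_j\in K$ with $f_j^{\#}(z_j)\to\infty$, and, passing to subsequences, arrange $z_j\to z_0\in K$ and $f_j\to g$ locally uniformly in the $\sigma$-metric, where $g$ is meromorphic or $\equiv\infty$. If $g(z_0)\neq\infty$, then near $z_0$ the convergence is locally uniform in the ordinary (Euclidean) sense, so $f_j'\to g'$ there and therefore $f_j^{\#}(z_j)\to g^{\#}(z_0)<\infty$, a contradiction. If $g(z_0)=\infty$, I invoke the conformal invariance $(1/f)^{\#}=f^{\#}$ (equivalently, the rotation invariance of $\sigma$): near $z_0$ one has $1/f_j\to 1/g$ with $(1/g)(z_0)=0$, so the previous case applied to $1/f_j$ yields $f_j^{\#}(z_j)=(1/f_j)^{\#}(z_j)\to(1/g)^{\#}(z_0)<\infty$, again a contradiction.

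The two points requiring genuine care are (i) justifying that a $\sigma$-locally-uniform limit of meromorphic functions is meromorphic or identically $\infty$ --- one localizes, using ordinary Weierstrass convergence where the limit is finite and passing to reciprocals where it is infinite --- and (ii) handling a limit function with a pole exactly at the bad point $z_0$, which is precisely where the identity $(1/f)^{\#}=f^{\#}$ is indispensable. Everything else is routine bookkeeping with Arzel\`a--Ascoli and the length-estimate for $\sigma$.
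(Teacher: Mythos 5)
Your proof is correct: it is the classical proof of Marty's criterion (spherical equicontinuity from the length estimate $\sigma(f(z_1),f(z_2))\leq M\,\mathrm{length}(\gamma)$ plus Arzel\`a--Ascoli on $(\widehat{\mathbb{C}},\sigma)$ for sufficiency, and the normal-limit contradiction using $(1/f)^{\#}=f^{\#}$ for necessity), and you correctly isolate the only two delicate points. The paper itself gives no proof --- Lemma 2.1 is quoted from Schiff's book --- and your argument is essentially the standard one found in that cited reference, so there is nothing to reconcile.
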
 

We recall the well-known Zalcman's lemma \cite{LZ1}.
\begin{lem}\label{l2.2}\cite[Zalcman's lemma]{LZ1} Let $\mathcal{F}$ be a family of meromorphic functions in the unit disc $\Delta$ such that all zeros of functions in $\mathcal{F}$ have multiplicity greater than or equal to $l$ and all poles of functions in $F$ have multiplicity greater than or equal to $j$ and $\alpha$ be a real number satisfying $-l<\alpha<j$. Then $\mathcal{F}$ is not normal in any neighborhood of $z_{0}\in \Delta$ if and only if there exist
\begin{enumerate}\item[(i)] points $z_{n}\in \Delta$, $z_{n}\rightarrow z_{0}$,
\item[(ii)] positive numbers $\rho_{n}$, $\rho_{n}\rightarrow 0^{+}$ and 
\item[(iii)] functions $f_{n}\in \mathcal{F}$,\end{enumerate}
such that $\rho_{n}^{\alpha} f_{n}(z_{n}+\rho_{n} \zeta)\rightarrow g(\zeta)$ spherically locally uniformly in $\mathbb{C}$, where $g$ is a non-constant meromorphic function. The function $g$ may be taken to satisfy the normalisation $g^{\#}(\zeta)\leq g^{\#}(0)=1 (\zeta \in \mathbb{C})$.
\end{lem}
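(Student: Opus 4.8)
\medskip\noindent
The plan is to run the classical rescaling argument behind Zalcman's lemma (in the form refined by Pang), using Marty's criterion (Lemma \ref{l2.1}) as the bridge between non-normality and unboundedness of spherical derivatives, and using the multiplicity hypotheses together with the restriction $-l<\alpha<j$ only to prevent the limit function from degenerating. For the easy implication I would argue by contradiction: if the rescaled functions $\rho_n^{\alpha}f_n(z_n+\rho_n\zeta)$ converge locally uniformly to a nonconstant meromorphic $g$ on $\mathbb{C}$ but $\mathcal{F}$ were normal at $z_0$, then a subsequence of $\{f_n\}$ would converge spherically and locally uniformly near $z_0$ to a function that is meromorphic or $\equiv\infty$; since $z_n\to z_0$ and $\rho_n\to 0$, Hurwitz's theorem (which forces a zero, resp.\ pole, of this limit to have multiplicity $\geq l$, resp.\ $\geq j$) together with $\alpha+l>0$ and $\alpha-j<0$ would force $\rho_n^{\alpha}f_n(z_n+\rho_n\zeta)$ to tend locally uniformly to a constant ($0$ or $\infty$), contradicting that $g$ is nonconstant.

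For the main implication, assume $\mathcal{F}$ is not normal at $z_0$; I may take $z_0=0$. Non-normality at $0$ means that for every $m\in\mathbb{N}$ the restriction of $\mathcal{F}$ to $\Delta(0,1/m)$ fails to be normal, so by Lemma \ref{l2.1} the spherical derivatives of members of $\mathcal{F}$ are unbounded on $\overline{\Delta(0,1/(2m))}$. The core of the argument is to select, for each $m$, a function $f_m\in\mathcal{F}$, a centre $z_m$ with $|z_m|\leq 1/(2m)$ (so that $z_m\to 0$ automatically) and a scale $\rho_m\to 0^{+}$ such that the rescaled functions $g_m(\zeta):=\rho_m^{\alpha}f_m(z_m+\rho_m\zeta)$ have spherical derivatives uniformly bounded on compact subsets of $\mathbb{C}$, normalized by $g_m^{\#}(\zeta)\leq g_m^{\#}(0)=1$. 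In the classical case $\alpha=0$ I would take $z_m$ to maximize the boundary-weighted spherical derivative $z\mapsto (r_m^2-|z|^2)f^{\#}(z)$ over $|z|\leq r_m$ (with $r_m=1/(2m)$), choose $f_m$ so that this maximum $M_m\to\infty$, and set $\rho_m=1/f_m^{\#}(z_m)$; then $g_m^{\#}(\zeta)\leq (r_m^2-|z_m|^2)/(r_m^2-|z_m+\rho_m\zeta|^2)\to 1$ while $g_m^{\#}(0)=1$. For general $\alpha$ I would modify the weighted quantity being maximized so that the extra factor $\rho_m^{\alpha}$ gets absorbed into the bound, and it is precisely here that $-l<\alpha<j$ (through the multiplicity bounds and Hurwitz) enters.

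It then remains to pass to the limit. Since $\rho_m\to 0$ and $|z_m|\leq 1/(2m)$, each $g_m$ is meromorphic on an ever-larger disc, hence eventually on any fixed $\Delta(0,S)$; the uniform bound on $g_m^{\#}$ together with Lemma \ref{l2.1} shows that $\{g_m\}$ is normal, so a subsequence converges spherically and locally uniformly to a meromorphic $g$ on $\mathbb{C}$ (a priori, possibly $\equiv\infty$). From $g^{\#}(0)=\lim g_m^{\#}(0)=1$ I conclude that $g$ is nonconstant and $\not\equiv\infty$, while $g^{\#}(\zeta)=\lim g_m^{\#}(\zeta)\leq 1$ is the asserted normalization; Hurwitz's theorem transfers the multiplicity hypotheses from the $f_m$, hence from the $g_m$, to $g$.

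The hard part will be the $\alpha$-dependent selection of $z_m$ and $\rho_m$ in the main implication: one must arrange the rescaling so that $g_m(\zeta)=\rho_m^{\alpha}f_m(z_m+\rho_m\zeta)$ simultaneously satisfies $g_m^{\#}\leq 1$ with equality at $0$ and produces a nonconstant limit, and the verification that $-l<\alpha<j$ is exactly the condition making this possible --- via the interplay of the zero/pole multiplicity constraints with Hurwitz's theorem --- is the only genuinely delicate point; everything else is a routine application of Marty's criterion.
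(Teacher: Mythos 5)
The paper offers no proof of Lemma \ref{l2.2} at all: it is quoted as a known result from Zalcman's survey \cite{LZ1} (in the refined form due to Pang and Zalcman), so there is no internal argument to compare yours against, and your proposal must stand as a self-contained proof attempt. As such it has a genuine gap exactly where the lemma is nontrivial. You describe the correct architecture (Marty's criterion, rescaling, normality of the rescaled family, Hurwitz, passage to the limit), but your construction is only complete in the classical case $\alpha=0$; for general $\alpha$ you explicitly defer ``the $\alpha$-dependent selection of $z_m$ and $\rho_m$'' and merely assert that $-l<\alpha<j$ ``is exactly the condition making this possible.'' That selection \emph{is} the content of the lemma. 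In the Pang--Zalcman argument one maximizes a weighted expression, essentially $(1-|z|^2)^{1+\alpha}\,|f_n'(z)|\big/\big((1-|z|^2)^{2\alpha}+|f_n(z)|^2\big)$, takes $z_n$ at the maximum point, lets $\rho_n$ be determined by the maximal value, and then runs a case analysis (for instance on whether $\rho_n^{\alpha}f_n(z_n)$ stays bounded) in which the hypotheses that zeros have multiplicity $\geq l$ and poles multiplicity $\geq j$ are used, via local expansions at zeros/poles and Hurwitz, to rule out degeneration of the limit to $0$ or $\infty$ and to secure the normalization $g^{\#}(\zeta)\leq g^{\#}(0)=1$. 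None of this is carried out in your sketch, so the main implication is not proved.

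A secondary point: even your ``easy'' converse is justified by Marty's inequality alone only when $-1<\alpha<1$. From $f_n^{\#}\leq M$ near $z_0$ one gets $g_n^{\#}(\zeta)=\rho_n^{1+\alpha}|f_n'(z_n+\rho_n\zeta)|/(1+\rho_n^{2\alpha}|f_n(z_n+\rho_n\zeta)|^2)\leq M\rho_n^{1+\alpha}$ for $\alpha\leq 0$ and $\leq M\rho_n^{1-\alpha}$ for $\alpha\geq 0$, which tends to $0$ (forcing $g$ to be constant, the desired contradiction) only for $|\alpha|<1$; for $\alpha\geq 1$ (possible when $j\geq 2$) or $\alpha\leq -1$ (possible when $l\geq 2$) the multiplicity hypotheses must again be invoked, and your appeal to Hurwitz there is too vague to count as an argument. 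In short, the plan is the standard and correct one, but the two places where the hypothesis $-l<\alpha<j$ actually does work are precisely the places left unproved.
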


\smallskip
We may take $-1<\alpha<1$, if there is no restrictions on the zeros and poles of functions in $\mathcal{F}$. If all functions in $\mathcal{F}$ are holomorphic, then we may take $-1<\alpha<\infty$. On the other hand we may choose $-\infty<\alpha<1$ for families of meromorphic functions which do not vanish.

\smallskip
The following result due to Chang and Zalcman \cite{CZ1}.

\begin{lem}\label{l2.3}\cite[Lemma 2]{CZ1} Let $f$ be a non-constant entire function such that $N(r,f)=O(\log r)$ as $r\to\infty$. If $f$ has bounded spherical derivative on $\mathbb{C}$, then $\rho(f)\leq 1$.
\end{lem}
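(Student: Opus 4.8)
My plan has two stages. The first is soft and yields only $\rho(f)\le 2$. Writing $f^{\#}(z)\le M$ on $\mathbb C$, I would use the Ahlfors--Shimizu characteristic $T_{0}(r,f)=\int_{0}^{r}A(t)\,t^{-1}\,dt$, where $A(t)=\pi^{-1}\int_{|z|\le t}(f^{\#})^{2}\,dx\,dy$ is the spherical area of $f(\{|z|\le t\})$ counted with multiplicity; then $f^{\#}\le M$ forces $A(t)\le M^{2}t^{2}$, hence $T_{0}(r,f)\le\tfrac12 M^{2}r^{2}$, and since $T(r,f)=T_{0}(r,f)+O(1)$ we get $\rho(f)\le 2$. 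This uses nothing beyond boundedness of $f^{\#}$, and in particular nothing about $N(r,f)$.

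The second stage must exploit the hypothesis $N(r,f)=O(\log r)$ -- which for an entire $f$ merely records that $f$ has no poles -- to improve $2$ to $1$; this is the substance of the lemma, and it is essentially the Clunie--Hayman theorem. I would attack it through the Hadamard factorisation: knowing $\rho(f)\le 2$, we may write $f=e^{P}\,\Pi$ with $\deg P\le 2$ and $\Pi$ a canonical product of genus $\le 2$ over the zeros of $f$. If $\rho(f)>1$, then either $\deg P=2$ or the convergence exponent of the zeros exceeds $1$; in both cases I expect one can produce arbitrarily large radii $r$ -- chosen so that $\{|z|=r\}$ avoids the zeros -- together with an arc of $\{|z|=r\}$ along which $\log|f|$ is obliged to run from a large positive value down through $0$ over arclength $O(1)$. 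The rate of that descent is $\gtrsim r$, so at a point of the arc where $|f|\asymp 1$ one would have $f^{\#}=\tfrac12|f'/f|\gtrsim r$, contradicting $f^{\#}\le M$ once $r$ is large. Concretely: if $P(z)=az^{2}+\cdots$ with $a\ne 0$ then $\log|e^{P(re^{i\theta})}|=r^{2}\,\mathrm{Re}(ae^{2i\theta})+O(r)$ varies at rate $\sim r^{2}$ in $\theta$ near the Stokes directions $\mathrm{Re}(ae^{2i\theta})=0$, while $|\Pi|$ can be kept comparatively small on suitably chosen circles; if instead the zeros are too dense, the product $\Pi$ makes such a transition by itself in a thin neighbourhood of a cluster of zeros, $|f|$ dropping from something $\gtrsim e^{cr^{\sigma}}$ to $0$. (An alternative route would bound $n(r,a)=O(r)$ for every finite $a$, identifying it with the winding number of the image of $\{|z|=r\}$ about $a$ and using that $f$ entire controls this curve; then the Second Main Theorem for three distinct finite values, together with $S(r,f)=O(\log r)$ which holds as $f$ has finite order, gives $T(r,f)\le\sum_{j=1}^{3}N(r,a_{j})+S(r,f)=O(r)$.)

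The main obstacle is exactly this second stage. Turning ``a transition of $\log|f|$ through $0$ on $\{|z|=r\}$ costs $f^{\#}\gtrsim r$'' into a proof needs careful minimum-modulus and Borel--Carath\'eodory estimates for the canonical product on circles that dodge its zeros, plus an honest verification that such a transition really occurs; in the alternative route the difficulty is concentrated in the non-obvious estimate $n(r,a)=O(r)$, which is precisely what Clunie and Hayman establish. As the statement is quoted from \cite{CZ1}, the paper invokes it as a black box.
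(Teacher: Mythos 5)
The paper supplies no proof of this lemma at all: it is imported as a black box, namely Lemma~2 of Chang--Zalcman \cite{CZ1}, which is in substance the Clunie--Hayman theorem on entire functions of bounded spherical derivative. So the only question is whether your argument would stand on its own, and it does not. Your first stage is correct and standard: $f^{\#}\le M$ gives $A(t)\le M^{2}t^{2}$, hence $T_{0}(r,f)=O(r^{2})$ and $\rho(f)\le 2$. But the whole content of the lemma is the passage from $2$ to $1$, and there you offer a plan rather than a proof. In the Hadamard route the two decisive assertions are left unestablished: (i) that for arbitrarily large $r$ one can choose circles $|z|=r$ avoiding the zeros on which $\log|f|$ is forced to descend through $0$ along an arc of length $O(1)$ at rate $\gtrsim r$, and (ii) that such a transition yields a single point where simultaneously $|f|\asymp 1$ and $|f'|\gtrsim r$. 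Point (ii) does not follow merely from a large change of $\log|f|$ over a short arc; one needs a mean-value/Harnack-type step to locate a point where the gradient is large \emph{and} $|f|\asymp 1$ at the same time. And when $1<\rho\le 2$ arises from the exponent of convergence of the zeros rather than from $\deg P=2$, your claim that ``$|\Pi|$ can be kept comparatively small'' on suitable circles is exactly the delicate minimum-modulus issue: Cartan-type estimates exclude discs, not the circles along which you need the steep crossing, so the existence of the advertised transition is not verified. Your alternative route has the same status: everything reduces to $n(r,a)=O(r)$ for three values, which you yourself concede is ``precisely what Clunie and Hayman establish'' and do not prove; the winding-number heuristic is not immediate either, since the spherical length bound $\int_{|z|=r}f^{\#}\,|dz|\le 2\pi M r$ controls the winding about $a$ only while the image curve stays spherically bounded away from $a$.

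In short, the proposal proves $\rho(f)\le 2$ rigorously but leaves the lemma's actual assertion resting on steps that are essentially equivalent to the theorem being proved. Since the paper itself only cites \cite{CZ1}, invoking Clunie--Hayman (or Chang--Zalcman) is the legitimate way to close this gap; as written, your second stage is a heuristic outline, not a proof.
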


In order to prove Theorem \ref{t1}, we need the following result which is of independent interest.

\begin{lem}\label{l2.4} Let $F=f^{k+1}$ be a non-constant entire function and $k\in\mathbb{N}$. If $F=1\Rightarrow \mathscr{L}_k\left(F\right)=1$, then $\rho(f)\leq 1$.
\end{lem}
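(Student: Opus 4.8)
The plan is to deduce the estimate from the Chang--Zalcman criterion (Lemma~\ref{l2.3}). Since $f$ is entire, $N(r,f)\equiv 0=O(\log r)$, so it suffices to prove that $f$ has bounded spherical derivative on $\mathbb{C}$; I argue by contradiction, assuming $f^{\#}$ is unbounded on $\mathbb{C}$. The family $\mathcal{F}=\{f(z+w):w\in\mathbb{C}\}$ is translation invariant, so by Marty's criterion (Lemma~\ref{l2.1}) it is non-normal at every point, in particular at $0$. Applying Zalcman's lemma (Lemma~\ref{l2.2}) with $\alpha=0$ (admissible since the members of $\mathcal{F}$ are holomorphic) gives $w_n\in\mathbb{C}$, $z_n\to 0$, $\rho_n\to 0^{+}$ and a non-constant entire function $g$ with $g^{\#}(\zeta)\le g^{\#}(0)=1$ such that $f(w_n+z_n+\rho_n\zeta)\to g(\zeta)$ locally uniformly on $\mathbb{C}$. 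Putting $\Phi_n(\zeta):=F(w_n+z_n+\rho_n\zeta)=f(w_n+z_n+\rho_n\zeta)^{\,k+1}$, we get $\Phi_n^{(i)}\to G^{(i)}$ locally uniformly for each $i$, where $G:=g^{\,k+1}$ is a non-constant entire function.

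Next I transfer the partial-sharing hypothesis to $G$. From $\mathscr{L}_k(F)=\sum_{i=0}^{k}a_iF^{(i)}$ and the chain rule, for $z=w_n+z_n+\rho_n\zeta$ one has
\[\rho_n^{\,k}\,\mathscr{L}_k(F)(z)=\Phi_n^{(k)}(\zeta)+a_{k-1}\rho_n\Phi_n^{(k-1)}(\zeta)+\cdots+a_0\rho_n^{\,k}\Phi_n(\zeta)\longrightarrow G^{(k)}(\zeta)\]
locally uniformly. By Picard's theorem $g$ omits at most one value, so $g^{\,k+1}-1=G-1$ has zeros; fix one, $\zeta_0$. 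By Hurwitz's theorem there are $\zeta_n\to\zeta_0$ with $\Phi_n(\zeta_n)=1$, i.e.\ $F=1$ at the corresponding points, so the hypothesis gives $\mathscr{L}_k(F)=1$ there, and evaluating the displayed identity at $\zeta_n$ yields $\rho_n^{\,k}\to G^{(k)}(\zeta_0)$, whence $G^{(k)}(\zeta_0)=0$. Thus the limit satisfies $G=1\Rightarrow G^{(k)}=0$ with $G=g^{\,k+1}$; equivalently $\bigl(g^{\,k+1}\bigr)^{(k)}$ vanishes at every point where $g$ assumes a $(k+1)$-th root of unity.

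The remaining, and hardest, step is to turn this into a contradiction. A zero $\zeta_0$ of $G-1$ of multiplicity $p$ is exactly a point where $g$ takes some root of unity $\omega^{j}$ with multiplicity $p$, and $G-1$ then vanishes to order exactly $p$ at $\zeta_0$; hence $G^{(k)}(\zeta_0)=0$ forces $p\neq k$ and is automatic once $p\ge k+1$. Applying the second fundamental theorem to $g$ with the $k+1$ values $1,\omega,\dots,\omega^{k}$ together with $\infty$ (and $\overline{N}(r,g)=0$) gives $k\,T(r,g)\le \overline{N}\bigl(r,\tfrac{1}{g^{k+1}-1}\bigr)+S(r,g)$; if the zeros of $g^{k+1}-1$ all had multiplicity $\ge k+1$, then $\overline{N}\bigl(r,\tfrac{1}{g^{k+1}-1}\bigr)\le\tfrac{1}{k+1}N\bigl(r,\tfrac{1}{g^{k+1}-1}\bigr)\le T(r,g)+O(1)$, forcing $(k-1)T(r,g)\le S(r,g)$, impossible for $k\ge 2$. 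The real obstacle is thus to dispose of $1$-points of small multiplicity $p\le k-1$ --- at which $G^{(k)}(\zeta_0)=0$ is a ``gap'' condition on Taylor coefficients, not a ramification statement --- and, in the borderline case $k=1$, to rule out sine-type limit functions by a separate argument. I would handle these by going back to $f$ itself and analysing the meromorphic function $\psi:=\mathscr{L}_k(F)/F=\sum_{i=0}^{k}a_iF^{(i)}/F$: since $F=f^{\,k+1}$ has every zero of multiplicity $\ge k+1$, $\psi$ has a pole of order exactly $k$ at each zero of $f$ and no other poles, so $m(r,\psi)=S(r,f)$ and $T(r,\psi)=k\,\overline{N}(r,1/f)+S(r,f)$, while the hypothesis forces $\psi=1$ at every $1$-point of $F$; feeding $\overline{N}(r,1/(F-1))\le\overline{N}(r,1/(\psi-1))\le T(r,\psi)+O(1)$ into the second fundamental theorem for $F$ pins the counting functions of $f$ down sharply, which together with the multiplicity restrictions above should close the proof.
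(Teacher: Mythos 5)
The first half of your argument is essentially the paper's: a Zalcman rescaling (Lemma \ref{l2.2}, $\alpha=0$) of translates, convergence of the rescaled derivatives, and the identity $\rho_n^{k}\mathscr{L}_k(F)(z)=\sum_{i=0}^{k}a_i\rho_n^{k-i}\Phi_n^{(i)}(\zeta)$ evaluated along Hurwitz points to transfer the hypothesis to the limit, giving $G=1\Rightarrow G^{(k)}=0$ (the paper rescales $F$ itself and gets a limit $H$ whose zeros have multiplicity $\geq k+1$ by Hurwitz, which is the same in substance). The genuine gap is the concluding contradiction, and you have located it yourself: the properties you can extract for the limit alone do not exclude it. For $k=1$ the function $g=\sin$ satisfies every condition you derive ($g$ entire, order $\leq1$, bounded spherical derivative, and $g'=0$ wherever $g=\pm1$), and for $k\geq2$ the condition $G^{(k)}(\zeta_0)=0$ says nothing against $1$-points of multiplicity $p\leq k-1$; your second fundamental theorem computation only excludes the case where all such points are ramified of order $\geq k+1$. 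So no contradiction can come from the limit function by itself, and the decisive step is missing.

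Your proposed repair, returning to the original $F$ and analysing $\psi=\mathscr{L}_k(F)/F$, cannot close this as described. The relations you list ($m(r,\psi)=S(r,f)$, $N(r,\psi)=k\,\overline{N}(r,0;f)$, and $\overline{N}(r,1;F)\leq T(r,\psi)+O(1)$ when $\psi\not\equiv1$), fed into the second fundamental theorem for $F$, give $(k+1)T(r,f)\leq(k+1)\overline{N}(r,0;f)+S(r,f)$, which is perfectly consistent; more importantly, counting-function inequalities of this type place no bound on the order of $f$ and produce no contradiction with the assumed unboundedness of $f^{\#}$, and you give no mechanism for coupling them to the rescaled limit (note also that concluding anything from $\psi$ being small would presuppose $\overline{N}(r,0;f)=S(r,f)$, which is false in the extremal cases of Theorem \ref{t1}). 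The paper closes the argument differently: it first rules out $1$ being Picard exceptional for the limit (second fundamental theorem, using $\overline{N}(r,0;H)\leq\frac{1}{k+1}T(r,H)+S(r,H)$); if $0$ is Picard exceptional, then $H=A\exp(\lambda\zeta)$ and $H=1\Rightarrow H^{(k)}=0$ is violated at once; and if $H$ has a zero $\xi_0$, a second Zalcman--Pang rescaling of the functions $G_n=H_n/\sqrt{\rho_n}$ at $\xi_0$ (exponent $-1/2$, with $\eta_n=1/G_n^{\#}(\xi_n)$, using that the zeros of the actual functions $H_n$ have multiplicity $\geq k+1\geq2$) yields a limit $\tilde G$ with $\tilde G(0)\neq0,\infty$ while $\tilde G_n(0)=\eta_n^{-1/2}G_n(\xi_n)\to\infty$, a contradiction. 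That secondary rescaling exploits the sequence and the scales, not merely the limit, and it is precisely what disposes of the sine-type limits and the low-multiplicity $1$-points that your sketch leaves open; without it, or a genuine substitute, your proof is incomplete.
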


\begin{proof}[{\bf Proof of Lemma \ref{l2.4}}]  Let $\mathcal{F}=\{F_{\omega}\}$, where $F_{\omega}(z)=F(\omega+z)$, $z\in\mathbb{C}$. Clearly $\mathcal{F}$ is a family of entire functions defined on $\mathbb{C}$. By assumption, we have $F(\omega+z)=0\Rightarrow F^{(k)}(\omega+z)=0$ and $F(\omega+z)=1\Rightarrow \mathscr{L}_k\left(F(\omega+z)\right)=1$.
Since normality is a local property, it is enough to show that $\mathcal{F}$ is normal at each point $z_0\in \mathbb{C}$.
Suppose on the contrary that $\mathcal{F}$ is not normal at $z_0$. Again since normality is a local property, we may assume that $\mathcal{F}$ is a family of holomorphic functions in a domain $\Delta=\{z: |z-z_0|<1\}$. Without loss of generality,
we assume that $z_0\in\Delta$. Then by Lemma \ref{l2.2}, there exist a sequence of functions $F_n\in\mathcal{F}$, where $F_n(z)=f^{k+1}(\omega_n+z)$, a sequence of complex numbers, $z_n$, $z_n\rightarrow z_0$ and a sequence of positive numbers $\rho_n$, $\rho_n\rightarrow 0$ such that
\bea\label{t1.1} H_n(\zeta)=F_n(z_n+\rho_n \zeta)\rightarrow H(\zeta)\eea
spherically locally uniformly in $\mathbb{C}$, where $H$ is a non-constant entire function such that $H^{\#}(\zeta)\leq 1$, $\forall$ $\zeta\in\mathbb{C}$. Then Lemma \ref{l2.3} gives $\rho(H)\leq 1$. Since all the zeros of $F$ have multiplicity at least $k+1$, Hurwitz's theorem guarantees that all the zeros of $H$ have multiplicity at least $k+1$. Consequently $H^{(k)}\not\equiv 0$.
Also from (\ref{t1.1}), we get
\bea\label{t1.2} H_n^{(i)}(\zeta)=\rho_n^iF_n^{(i)}(z_n+\rho_n \zeta)\rightarrow H^{(i)}(\zeta)\eea
spherically locally uniformly in $\mathbb{C}$ for all $i\in\mathbb{N}$. Clearly from (\ref{t1.1}) and (\ref{t1.2}) we see that
\bea\label{tt1.5} \mathscr{L}_k(H_n(\zeta))=\sideset{}{_{i=0}^k}{\sum} a_i H_n^{(i)}(\zeta)\sideset{}{_{i=0}^k}{\sum}a_i\rho_n^i F_n^{(i)}(z_n+\rho_n \zeta)\rightarrow \mathscr{L}_k(H(\zeta))\eea
spherically locally uniformly in $\mathbb{C}$. Note that 
\[\ol N(r,0;H)\leq \frac{1}{k+1} T(r,H)+S(r,H).\]

If $1$ is a Picard exceptional value of $H$, then we get a contradiction by the second fundamental theorem. Hence $1$ is not a Picard exceptional value of $H$.\par

Let $H(\zeta_0)=1$. Hurwitz's theorem implies the existence of a sequence $\zeta_n\rightarrow \zeta_0$ with $H_n(\zeta_n)=F_n(z_n+\rho_n \zeta_n)=1$. Since $F=1\Rightarrow \mathscr{L}_k(F)=1$, we have $\mathscr{L}_k(F_n(z_n+\rho_n \zeta_n))=1$ and so $\sum_{i=0}^k a_iF_n^{(i)}(z_n+\rho_n \zeta_n)=1$, i.e.,
\bea\label{ttt.1}\sideset{}{_{i=0}^k}{\sum} a_i \rho_n^k F_n^{(i)}(z_n+\rho_n \zeta_n)=\rho_n^k.\eea

Note that $\mathscr{L}_k(H_n(\zeta_n))=\sideset{}{_{i=0}^k}{\sum} a_i\rho_n^i F_n^{(i)}(z_n+\rho_n \zeta_n)$ and so from (\ref{ttt.1}), we get
\bea\label{tt1.6} \mathscr{L}_k(H_n(\zeta_n))=\sideset{}{_{i=0}^{k-1}}{\sum}a_i (1-\rho_n^{k-i})\rho_n^i F_n^{(i)}(z_n+\rho_n \zeta_n)-\rho_n^k.\eea

Consequently from (\ref{t1.2}) and (\ref{tt1.6}), we get
\bea\label{tt1.7} \mathscr{L}_k(H(\zeta_0))=\lim\limits_{n\to \infty} \mathscr{L}_k(H_n(\zeta_n))=\sideset{}{_{i=0}^{k-1}}{\sum}a_i H^{(i)}(\zeta_0).\eea

Again from (\ref{tt1.5}), we have
\bea\label{tt1.8} \mathscr{L}_k(H(\zeta_0))=\lim\limits_{n\to \infty} \mathscr{L}_k(H_n(\zeta_n))=\sideset{}{_{i=0}^k}{\sum}a_i H^{(i)}(\xi_0).\eea

Now from (\ref{tt1.7}) and (\ref{tt1.8}), we obtain
$H^{(k)}(\zeta_0)=0$. This shows that $H=1\Rightarrow  H^{(k)}=0$.

\smallskip
First suppose $0$ is a Picard exceptional value of $H$. Since $H$ is a non-constant entire function having no zeros and $\rho(H)\leq 1$, by Hadamard's factorization theorem, we take $H(\zeta)=A\exp(\lambda \zeta)$, where $A,\lambda\in\mathbb{C}\backslash \{0\}$. Since $H=1\Rightarrow H^{(k)}=0$, we get a contradiction.

\smallskip
Next suppose $0$ is not a Picard exceptional value of $H$. Let $\xi_0$ be a zero of $H$. Now we consider the following family
\beas \mathcal{G}=\left\lbrace G_{n}(\xi): G_{n}(\xi)=\frac{H_n(\xi)}{\sqrt{\rho_n}}=\frac{F_n(z_n+\rho_n \xi)}{\sqrt{\rho_n}}\right\rbrace.\eeas

We prove that the family $\mathcal{G}$ is not normal at $\xi_0$. If not suppose $\mathcal{G}$ is normal at $\xi_0$. Then for a given sequence of functions $\{G_n\}\subseteq \mathcal{G}$, there exist a subsequence of $\{G_n\}$ say itself such that
\bea\label{t1.3} G_n(\xi)=\rho_n^{-\frac{1}{2}}H_n(\xi)\to G(\xi)\eea
or possibly
\bea\label{t1.4} G_n(\xi)\to \infty\eea
spherical uniformly on $\mathbb{C}$ as $n\to\infty$.

Note that $H(\xi_0)=0$ and since $H$ is non-constant, we have $H\not\equiv 0$. Now from (\ref{t1.1}) and Hurwitz's Theorem, there exist $\xi_n$, $\xi_n\to \xi_0$ and $H_n(\xi_n)=0$. Consequently
\bea\label{t1.5} G(\xi_0)=\lim_{n\to\infty}\rho_n^{-\frac{1}{2}}H_{n}(\xi_n)=0.\eea

Now from (\ref{t1.5}), one can easily conclude that (\ref{t1.4}) does not hold. We know that zeros of a non-constant analytic function are isolated. Therefore we can find that there exists some deleted neighborhood $\Delta_{\delta(\xi_0)}=\{\xi: 0<|\xi-\xi_0|<\delta(\xi_0)\}$ of $\xi_0$ such that $H(\zeta)\neq 0$ for all $\xi\in \Delta_{\delta(\xi_0)}$, where
$\delta(\xi_0)\in\mathbb{R}^+$ depends only on $\xi_0$. Therefore for $\xi\in \Delta_{\delta(\xi_0)}$ there exists some positive number $\rho(\xi)$ depending only on $\xi$ such that $|H_n(\xi)|\leq \rho(\xi)$ for sufficiently large values of $n$. Consequently
\[G(\xi)=\lim_{n\to\infty}\rho_n^{-\frac{1}{2}}H_n(\xi)=\infty\]
and so $G(\xi)=\infty$, which contradicts the facts that $G(\xi)\not\equiv \infty$. Hence $\mathcal{G}$ is not normal at $\xi_0$.
Now by Lemma \ref{l2.2}, there exist a sequence of functions $G_n\in\mathcal{G}$, a sequence of complex numbers, $\xi_n$, $\xi_n\rightarrow \xi_0$ and a sequence of positive numbers $\eta_n$, $\eta_n\rightarrow 0$ such that
\bea\label{t1.6} \tilde{G}_n(\xi)=\frac{G_{n}(\xi_n+\eta_n\xi)}{\sqrt{\eta_n}}=\frac{F_n(z_n+\rho_n(\xi_n+\eta_n\xi))}{\sqrt{\rho_n\eta_n}}\rightarrow \tilde G(\xi)\eea
locally uniformly in $\mathbb{C}$, where $\tilde G$ is a non-constant entire function such that 
\bea\label{t1.7}\tilde{G}^{\#}(\xi)\leq \tilde{G}^{\#}(0)=1,\eea $\forall$ $\xi\in\mathbb{C}$. Then using Lemma \ref{l2.3}, we get $\rho(\tilde{G})\leq 1$. Also Hurwitz's theorem guarantees that all the zeros of $\tilde{G}$ have multiplicity at least $k+1$.
Again from the proof of Lemma \ref{l2.2}, we get 
\bea\label{t1.8} \eta_{n}=\frac{1}{G_{n}^{\#}(\xi_{n})}=\frac{1+|G_n(\xi_n)|^2}{|G_n^{(1)}(\xi_n)|}.\eea

Now from (\ref{t1.6}) it is easy to prove that
\bea\label{t1.8a}\eta_n^{\frac{1}{2}}G_n^{(1)}(\xi_n+\eta_n\xi)\to \tilde{G}^{(1)}(\xi)\eea
locally uniformly in $\mathbb{C}$. Note that
\bea\label{t1.9}\tilde{G}^{\#}(0)=\frac{|\tilde{G}^{(1)}(0)|}{1+|\tilde{G}(0)|^2}.\eea

Since $\tilde{G}^{\#}(0)=1$, from (\ref{t1.9}), we get $\tilde{G}^{(1)}(0)\neq 0$. Note that all the zeros of $\tilde{G}$ have multiplicity at least $k+1$. If $\tilde{G}(0)=0$, then obviously $\tilde{G}^{(1)}(0)=0$, which is impossible. Hence $\tilde{G}(0)\neq 0$.
Now from (\ref{t1.6}) and (\ref{t1.8a}), we have 
\bea\label{t1.8b} \frac{\tilde{G}_{n}^{(1)}(\xi)}{\tilde{G}_{n}(\xi)}=\eta_{n}\frac{G_{n}^{(1)}(\xi_{n}+\eta_{n}\xi)}{G_{n}(\xi_{n}+\eta_{n}\xi)}\rightarrow \frac{\tilde{G}^{(1)}(\xi)}{\tilde{G}(\xi)},\eea
locally uniformly in $\mathbb{C}$. Clearly from (\ref{t1.8}) and (\ref{t1.8b}), we see that 
\beas \eta_{n}\left|\frac{G_{n}^{(1)}(\xi_{n})}{G_{n}(\xi_{n})}\right|=\frac{1+|G_{n}(\xi_{n})|^{2}}{|G_{n}^{(1)}(\xi_{n})|}\frac{|G_{n}^{(1)}(\xi_{n})|}{|G_{n}(\xi_{n})|}
=\frac{1+|G_{n}(\xi_{n})|^{2}}{|G_{n}(\xi_{n})|}\rightarrow \left|\frac{\tilde{G}^{(1)}(0)}{\tilde{G}(0)}\right|\neq 0,\infty,\eeas
which implies that $\lim\limits_{n\rightarrow \infty} G_{n}(\xi_{n})\not=0,\infty$ and so from (\ref{t1.6}), we have
\[\tilde{G}_{n}(0)=\eta_{n}^{-\frac{1}{2}}G_{n}(\xi_{n}) \rightarrow \infty.\] 

Again from (\ref{t1.6}), we get $\tilde{G}_{n}(0)\rightarrow \tilde{G}(0)\neq 0,\infty$. So we get a contradiction.\par

Therefore all the foregoing discussion shows that $\mathcal{F}$ is normal at $z_0$. Consequently $\mathcal{F}$ is normal in $\mathbb{C}$. Hence by Lemma \ref{l2.1}, there exists $M > 0$ satisfying $F^{\#}(\omega)=F^{\#}_{\omega}(0)<M$ for all $\omega\in\mathbb{C}$. Consequently by Lemma \ref{l2.3}, we get $\rho(F)\leq 1$ and so $\rho(f)\leq 1$.
 
This completes the proof.
\end{proof}

\begin{lem}\label{l1} \cite[Theorem 4.1]{HKR,NO} Let $f$ be a non-constant entire function such that $\rho(f)\leq 1$ and $k\in\mathbb{N}$. Then $m\big(r,\frac{f^{(k)}}{f}\big)=o(\log r)$ as $r\to \infty$.
\end{lem}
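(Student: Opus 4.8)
The plan is to reduce the statement to the case $k=1$ and then to apply a sharp form of the lemma on the logarithmic derivative. First I would dispose of the trivial case in which $f$ is a polynomial: then $f^{(k)}/f$ is a rational function which is either identically zero (when $\deg f<k$) or tends to $0$ at infinity (its numerator having smaller degree than its denominator), so $m\big(r,f^{(k)}/f\big)=O(1)=o(\log r)$. Hence I may assume $f$ is transcendental. From the factorisation $\frac{f^{(k)}}{f}=\prod_{j=1}^{k}\frac{\big(f^{(j-1)}\big)^{(1)}}{f^{(j-1)}}$ and $\log^{+}\prod\le\sum\log^{+}$ we obtain $m\big(r,f^{(k)}/f\big)\le\sum_{j=1}^{k}m\big(r,(f^{(j-1)})^{(1)}/f^{(j-1)}\big)$. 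Each $f^{(j-1)}$ is again a transcendental entire function, and since differentiation preserves the order, $\rho\big(f^{(j-1)}\big)=\rho(f)\le 1$. So it is enough to prove: for every transcendental entire $g$ with $\rho(g)\le 1$ one has $m\big(r,g^{(1)}/g\big)=o(\log r)$.

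For this I would first normalise: if $g$ vanishes at the origin to order $m$, write $g=z^{m}h$ with $h$ transcendental entire, $h(0)\ne 0$ and $\rho(h)=\rho(g)$; since $g^{(1)}/g=m/z+h^{(1)}/h$ and $m(r,m/z)=0$ for $r\ge m$, it suffices to treat $h$. Now apply the sharp (Gol'dberg--Grinshtein / Ngoan--Ostrovskii) form of the logarithmic derivative estimate: since $h$ is entire with $h(0)\ne 0$, there is a constant $C$, depending only on $|h(0)|$, with $m\big(r,h^{(1)}/h\big)\le\log^{+}\!\Big(\frac{R}{r(R-r)}\,T(R,h)\Big)+C$ for all $0<r<R$. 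Choosing $R=2r$ gives, for $r\ge 1$, $m\big(r,h^{(1)}/h\big)\le\log^{+}\!\big(\tfrac{2}{r}T(2r,h)\big)+C\le\log^{+}T(2r,h)-\log r+C'$.

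It remains to feed in the order hypothesis. Fix $\varepsilon>0$. Because $\rho(h)\le 1$, we have $T(2r,h)\le(2r)^{1+\varepsilon}$ for all $r\ge r_{0}(\varepsilon)$, so $m\big(r,h^{(1)}/h\big)\le(1+\varepsilon)\log(2r)-\log r+C'=\varepsilon\log r+O_{\varepsilon}(1)$. Hence $\limsup_{r\to\infty}\frac{m(r,h^{(1)}/h)}{\log r}\le\varepsilon$ for every $\varepsilon>0$, so this $\limsup$ equals $0$; that is, $m\big(r,h^{(1)}/h\big)=o(\log r)$, and unwinding the reductions gives $m\big(r,f^{(k)}/f\big)=o(\log r)$.

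The only genuinely non-routine ingredient — and the reason the lemma is quoted from the literature rather than proved afresh — is the input estimate: one must use the \emph{sharp} logarithmic derivative lemma, in which $\log^{+}T(R,f)$ appears with coefficient $1$ and the remaining terms are only $\log^{+}\frac{R}{r(R-r)}$ and an absolute constant. Nevanlinna's classical version, in which $\log^{+}T(R,f)$ carries a coefficient $4$ and which contains a stray term of size $\log r$, only gives $m\big(r,f^{(k)}/f\big)=O(\log r)$, which is too weak. Internally, the sharp estimate is obtained from the Poisson--Jensen representation of $h^{(1)}/h$ by controlling the ``zero sum'' $\sum_{|a_{\nu}|<R}(z-a_{\nu})^{-1}$ via a concavity argument (Jensen's inequality applied to $\log(1+\,\cdot\,)$), which is where the cancellation hidden in that sum is used; combined with the freedom to let $\varepsilon\downarrow 0$, this upgrades $O(\log r)$ to $o(\log r)$ exactly when $\rho(f)\le 1$. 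Everything else — the polynomial case, the chain-rule reduction, and the normalisation at the origin — is routine bookkeeping.
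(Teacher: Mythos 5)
Your proof is correct. Note that the paper gives no proof of this lemma at all — it is quoted from the cited references (Heittokangas--Korhonen--R\"{a}tty\"{a} and Ngoan--Ostrovskii), whose content is exactly the sharp logarithmic-derivative estimate $m\big(r,\tfrac{h^{(1)}}{h}\big)\leq \log^{+}\big(\tfrac{R}{r(R-r)}T(R,h)\big)+O(1)$ that you invoke, so your reduction (polynomial case, telescoping to $k=1$, normalisation at the origin, then $R=2r$ and $T(2r,h)\leq (2r)^{1+\varepsilon}$) is precisely the intended justification; in fact the cited HKR result bounds $m\big(r,\tfrac{f^{(k)}}{f}\big)$ directly, which would let you skip the telescoping step, and the essential feature is that $T(R,f)$ is divided by $r$ inside a single $\log^{+}$ (any bounded coefficient on that term would still give $o(\log r)$), rather than the coefficient being exactly $1$.
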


\begin{lem}\label{l2}\cite[Lemma 2]{CCY} Let $f$ be a non-constant meromorphic function and let $a_{n}(\not\equiv 0), 
a_{n-1},\ldots,a_{0}$ be small functions of $f$.  Then $T(r,\sum_{i=0}^{n}a_{i}f^{i})= nT(r,f) + S(r,f).$ 
\end{lem}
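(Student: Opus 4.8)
The plan is to recognise this as the polynomial case of the classical Valiron--Mokhon'ko theorem and to establish the two inequalities $T(r,g)\le nT(r,f)+S(r,f)$ and $T(r,g)\ge nT(r,f)+S(r,f)$ separately, where I write $g:=\sum_{i=0}^{n}a_{i}f^{i}$. The upper bound is routine bookkeeping. For the proximity function one has on $|z|=r$ the pointwise estimate $\log^{+}|g|\le n\log^{+}|f|+\sum_{i=0}^{n}\log^{+}|a_{i}|+O(1)$ (obtained from $|g|\le(n+1)(\max_{i}|a_{i}|)(1+|f|^{n})$), which integrates to $m(r,g)\le nm(r,f)+S(r,f)$ because every $T(r,a_{i})=S(r,f)$; and at a pole of $f$ of order $\mu$ that is not a pole of any $a_{i}$ each $a_{i}f^{i}$ has a pole of order at most $i\mu$, so $g$ has a pole of order at most $n\mu$ there, whence (the poles of the $a_{i}$ adding only $S(r,f)$) $N(r,g)\le nN(r,f)+S(r,f)$. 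Adding these gives the upper bound.

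The real content is the lower bound, and the key is to avoid the naive route through subtraction: from $a_{n}f^{n}=g-\sum_{i<n}a_{i}f^{i}$ together with $T(r,\sum_{i<n}a_{i}f^{i})\le(n-1)T(r,f)+S(r,f)$ one gets only $T(r,g)\ge T(r,f)-S(r,f)$, since cancellation in the sum $a_{n}f^{n}+\dots+a_{0}$ is not controlled. Instead I would factor out the leading term, $g=a_{n}f^{n}(1+\varepsilon)$ with $\varepsilon=\sum_{i=0}^{n-1}(a_{i}/a_{n})f^{i-n}$, and set $\psi:=1+2\bigl(\sum_{i=0}^{n-1}|a_{i}|\bigr)/|a_{n}|$, observing that the logarithmic average of $\log^{+}\psi$ over $|z|=r$ is $S(r,f)$ because $\psi$ is dominated by a sum of moduli of small functions of $f$. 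On the set $\{\,|f|\ge\psi\,\}$ one has $|f|\ge 1$, hence $|f|^{i-n}\le|f|^{-1}$ for $i<n$, so $|\varepsilon|\le\bigl(\sum_{i<n}|a_{i}|\bigr)/(|a_{n}|\,|f|)\le\bigl(\sum_{i<n}|a_{i}|\bigr)/(|a_{n}|\psi)\le\tfrac12$; therefore $|g|\ge\tfrac12|a_{n}|\,|f|^{n}$ there and, since $|f|\ge 1$, $\log^{+}|g|\ge n\log^{+}|f|-\log^{+}(2/|a_{n}|)$. On the complementary set $\log^{+}|f|\le\log\psi$, so $\log^{+}|g|\ge n\log^{+}|f|-n\log^{+}\psi-\log^{+}(2/|a_{n}|)$ holds there trivially. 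This pointwise inequality, valid off the finitely many points of $|z|=r$ where some coefficient is singular or $a_{n}$ vanishes, integrates to $m(r,g)\ge nm(r,f)-S(r,f)$.

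For the counting function in the lower bound I would argue that at a pole of $f$ of order $\mu$ lying away from the zeros of $a_{n}$ and the poles of the $a_{i}$ (a set whose counting function is $S(r,f)$) the term $a_{n}f^{n}$ strictly dominates, so $g$ has a pole of order exactly $n\mu$; a short case analysis shows that the multiplicity ``lost'' at the remaining exceptional poles of $f$ is at most $nN(r,1/a_{n})=S(r,f)$. Hence $N(r,g)\ge nN(r,f)-S(r,f)$, and adding this to the proximity estimate gives $T(r,g)\ge nT(r,f)-S(r,f)$; combined with the upper bound this is the assertion. The step I expect to be the main obstacle is precisely this proximity lower bound: one must not compare characteristic functions directly but instead carry out the pointwise modulus comparison above, keeping careful track of which expressions built from the coefficients $a_{i}$ are genuinely small functions of $f$, so that quantities such as the logarithmic average of $\log^{+}\psi$, $m(r,1/a_{n})$, and the pole contributions of the $a_{i}$ are all absorbed into $S(r,f)$.
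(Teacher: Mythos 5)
The paper does not prove this lemma at all: it is quoted verbatim as Lemma 2 of C.~C. Yang's paper \emph{On deficiencies of differential polynomials II} (the polynomial case of the Valiron--Mokhon'ko theorem), so there is no in-paper argument to compare with. Your proof is correct and is essentially the standard direct proof of that classical result; the cited source and the usual textbook treatments instead run an induction on the degree $n$, whereas you argue pointwise on $|z|=r$ after factoring $g=a_nf^n(1+\varepsilon)$, which is a perfectly good (and arguably more transparent) route. The upper bound and the proximity lower bound are complete as written: $\psi$ is dominated by moduli of small functions, so its logarithmic average, as well as $m(r,1/a_n)$, are $S(r,f)$, and the two-case pointwise inequality integrates as you claim. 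The only step that needs tightening is the counting-function lower bound: at a pole $z_0$ of $f$ of order $\mu$ where cancellation can occur one must have $\varepsilon(z_0)=-1$, which forces some term $(a_i/a_n)f^{i-n}$ to have nonpositive valuation there, i.e.\ $\mu\le(n-i)\mu\le \nu^{+}+p_i$, where $\nu^{+}$ is the zero order of $a_n$ and $p_i$ the pole order of $a_i$ at $z_0$; since the loss at such a point is at most $n\mu$, the total loss is bounded by $n\bigl(N(r,1/a_n)+\sum_{i<n}N(r,a_i)\bigr)$ rather than by $nN(r,1/a_n)$ alone, as the coefficients are in general meromorphic. This is still $S(r,f)$, so the conclusion $N(r,g)\ge nN(r,f)-S(r,f)$, and hence the lemma, stands; you should just record this refinement when writing out the ``short case analysis.''
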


\medskip
\begin{proof}[{\bf Proof of Theorem \ref{t1}}] 
Let $F = f^{n}$ and 
\be\label{bm.1} \varphi=F^{(1)}(F -G)/F(F - 1),\ee
where $G=\mathscr{L}_k(F)$. By the given conditions, we have $F= 0\Rightarrow G=0$ and $F=1\Rightarrow G=1$.

Now we consider following two cases.\par

\smallskip
{\bf Case 1.} Let $\varphi \not\equiv 0$. Given conditions, imply that $\varphi$ is an entire function. Also (\ref{bm.1}) yields $m(r,\varphi)=S(r,f)$ and so $T(r,\varphi)=S(r,f)$. Again from (\ref{bm.1}), we get
\bea\label{mb} 1/F =\left(F^{(1)}/(F-1)-F^{(1)}/F\right)\left(1-G/F\right)/\varphi,\eea 
which shows that $m(r,1/F)=S(r,f)$ and so $m(r,1/f)=S(r,f)$. If $n>k+1$, then from (\ref{bm.1}), we deduce that
\[N(r,1/f)\leq N(r,1/\varphi)\leq T(r,\varphi)=S(r,f).\]

Now $m(r,1/f)=S(r,f)$ implies that 
\[T(r,f)=T(r,1/f)+S(r,f)=N(r,1/f)+m(r,1/f)+S(r,f)=S(r,f),\]
which is impossible. Henceforth we assume that $n=k+1$. Therefore $F=f^{k+1}$ and so by Lemma \ref{l2.4}, we get $\rho(F)\leq 1$, i.e., $\rho(f)\leq 1$. 
Now using Lemma \ref{l1} to (\ref{bm.1}), we get $m(r,\varphi)=o(\log r)$ as $r\to\infty$.
Since $N(r,\varphi)=0$, we have $T(r,\varphi)=o(\log r)$ as $r\to\infty$, which shows that $\varphi$ is a constant, say $c\in\mathbb{C}\backslash \{0\}$ and so from (\ref{bm.1}), we have 
\bea\label{bm.1a}\label{ebbm.4} F^{(1)}(F-G)=cF(F-1),\;\;\text{i.e.,}\;\;(k+1)f^{(1)}\left(f^{k+1}-\sideset{}{_{i=0}^k}{\sum}a_i(f^{k+1})^{(i)}\right)=cf\big(f^{k+1}-1\big),\eea
which shows that $f$ is a transcendental entire function. Differentiating (\ref{bm.1a}) once, we get
\bea\label{bm.1aa} F^{(2)}(F-G)+F^{(1)}(F^{(1)}-G^{(1)})=c F^{(1)}(2F-1).\eea

Let $z_{0}$ be a zero of $F$ of multiplicity $m(\geq k+1)$. Then $F(z_{0})=0$ and $F^{(k)}(z_{0})=0$. 
Note that if $m\geq k+2$, then from (\ref{bm.1a}), we get a contradiction. Hence $m=k+1$ and so all the zeros of $F$ have multiplicity exactly $k+1$. Consequently $f$ has only simple zeros.

Let $z_1$ be a simple zero of $f$. Then $z_1$ is a zero of $F$ of multiplicity exactly $k+1$ and so $F(z_1)=0$ and $G(z_1)=0$. Also we see that $G^{(1)}(z_1)=F^{(k+1)}(z_1)$. Now by a routine calculation, one can easily conclude from (\ref{bm.1aa}) that $F^{(k+1)}(z_{1})=c/(k+1)$. 
Consequently 
\bea\label{ebmm12.2} F=0\Rightarrow F^{(k+1)}=c/(k+1).\eea

\smallskip
Since $\varphi=c\in\mathbb{C}\backslash \{0\}$, using Lemma \ref{l1} to (\ref{mb}), we get $m(r,1/F)=o(\log r)$ as $r\to\infty$.
Now from the proof of the Theorem 1.1 \cite{MS1}, we see that $0$ and $1$ are not Picard exceptional values of $f$,
\bea\label{ebbm.2} F^{(k)}=(k+1)!f(f^{(1)})^{k} +k(k-1)(k+1)!f^{2}(f^{(1)})^{k-2}f^{(2)}/4+R_1(f),\eea
\bea\label{ebbm.3} F^{(k + 1)}=(k + 1)!(f^{(1)})^{k + 1} +k(k + 1)(k + 1)!f(f^{(1)})^{k - 1}f^{(2)}/2+R_2(f),\eea  
\be\label{ebbm.3a} F^{(k + 2)}=(f^{k+1})^{(k+2)}=(k + 1)!(k+1)(k+2)(f^{(1)})^{k}f^{(2)}/2+R_3(f),\ee  
where $R_{i}(f)$'s are differential polynomial in $f$, where $i=1,2,3$ and each term of $R_{1}(f)$ contains $f^{m} (3 \leq m \leq k$) as a factor.

\smallskip
Denote by $N(r,0;f^{(1)}\mid F\neq 1)$ the counting function of those zeros of $f^{(1)}$ which are not zeros of $F-1$. We denote by $N(r,1;F\mid\geq 2)$ the counting function of multiple $1$-points of $F$.

\smallskip
Now we divide following two sub-cases.\par

\smallskip
{\bf Sub-case 1.1.} Let $N(r,1;F\mid\geq 2)=0$. Now from (\ref{ebbm.4}), we deduce that $N(r,0;f^{(1)}\mid F\neq 1)=0$.
Since $F=f^{k+1}$ and $N(r,1;F\mid\geq 2)=0$, it follows that $f^{(1)}\neq 0$. Note that $f$ is a transcendental entire function such that $\rho(f)\leq 1$ and $f^{(1)}\neq 0$. So we can take 
\bea\label{ebbm.5} f^{(1)}(z)=d_0\exp \lambda z,\eea
where $d_0,\lambda\in\mathbb{C}\backslash \{0\}$. On integration, we get 
\bea\label{ebbm.6} f(z)=(d_0\exp \lambda z+d_1\lambda)/\lambda,\eea
where $d_1\in\mathbb{C}$. Since $0$ is not a Picard exceptional value of $F=f^{k+1}$, it follows that $d_1\neq 0$.

Let $z_{2}$ be a zero of $f$. Then $f(z_{2})=0$ and $F(z_2)=0$. Therefore from (\ref{ebmm12.2}), we have $F^{(k+1)}(z_{2})=c/(k+1)$ and so
from (\ref{ebbm.3}), we get
\bea\label{ebbm.00}  \big(f^{(1)}(z_{2})\big)^{k+1}=c/(k+1)(k+1)!.\eea

Now from (\ref{ebbm.5})-(\ref{ebbm.00}), we get $d_0^{k+1}\exp (k+1)\lambda z_{2}=(-\lambda d_1)^{k+1}$ and
$d_0^{k+1}\exp (k+1)\lambda z_{2}=c/(k+1)(k+1)!$ and so
\bea\label{ebbm.9} (-\lambda d_1)^{k+1}=c/(k+1)(k+1)!.\eea

Again from (\ref{ebbm.6}), we have 
\bea\label{xx1} f^{k+1}(z)&=&\left(d_0/\lambda\right)^{k+1}\exp (k+1)\lambda z+\;{}^{k+1}C_{1}\left(d_0/\lambda\right)^{k}d_1\exp k\lambda z+\ldots\\&&+\;{}^{k+1}C_{k-1}\left(d_0/\lambda\right)^{2}d_1^{k-1}\exp 2\lambda z+\;{}^{k+1}C_{k}(d_0/\lambda) d_1^{k}\exp \lambda z+d_1^{k+1}\nonumber\eea
and so
\bea\label{xx2} (f^{k+1})^{(i)}(z)&=&\left(d_0/\lambda\right)^{k+1}((k+1)\lambda)^i\exp (k+1)\lambda z+\;{}^{k+1}C_{1}\left(d_0/\lambda\right)^{k}d_1(k\lambda)^i\exp k\lambda z+\ldots\nonumber\\&&+\;{}^{k+1}C_{k-1}\left(d_0/\lambda\right)^{2}d_1^{k-1}(2\lambda)^i\exp 2\lambda z+\;{}^{k+1}C_{k}\frac{d_0}{\lambda} d_1^{k}\lambda^i\exp \lambda z,\eea
where $i\in\mathbb{N}$. Consequently
\bea\label{xx3} \sideset{}{_{i=0}^k}{\sum}a_i(f^{k+1})^{(i)}&=&\left(d_0/\lambda\right)^{k+1}\sideset{}{_{i=0}^k}{\sum}a_i\left((k+1)\lambda\right)^i\exp (k+1)\lambda z\\&&+\;{}^{k+1}C_{1}\left(d_0/\lambda\right)^{k}\sideset{}{_{i=0}^k}{\sum}a_i\left(k\lambda\right)^i d_1\exp k\lambda z+\ldots\nonumber\\
&&+\;{}^{k+1}C_{k-1}\left(d_0/\lambda\right)^{2}\sideset{}{_{i=0}^k}{\sum}a_i(2\lambda)^{i} d_1^{k-1}\exp 2\lambda z\nonumber\\
&&+\frac{(k+1)d_0d_1^{k}}{\lambda}\sideset{}{_{i=0}^k}{\sum}a_i\lambda^{i}\exp \lambda z.\nonumber\eea

Now using (\ref{xx1})-(\ref{xx3}) to (\ref{ebbm.4}), we get 
\bea\label{ebbm.10} &&\left\{\frac{k+1}{\lambda^{k+1}}-\frac{k+1}{\lambda^{k+1}}\sideset{}{_{i=1}^k}{\sum}a_i((k+1)\lambda)^i-\frac{c}{\lambda^{k+2}}\right\}d_{0}^{k+2}\exp (k+2)\lambda z\\
&&+\left\{\frac{(k+1)^{2}}{\lambda^{k}}-\frac{(k+1)^{2}}{\lambda^k}\sideset{}{_{i=1}^k}{\sum}a_i(k\lambda)^i-\frac{c(k+2)}{\lambda^{k+1}}\right\}d_{0}^{k+1}d_{1}\exp (k+1)\lambda z+\ldots\nonumber\\
&&+\left\{\frac{(k+1)^{2}}{\lambda}-\frac{(k+1)^{2}}{\lambda}\sideset{}{_{i=1}^k}{\sum}a_i\lambda^i-c\binom{k+2}{k}\frac{1}{\lambda^{2}}\right\}d_0^2d_1^k\exp 2\lambda z\nonumber\\
&&+\left\{k+1-\frac{c(k+2)}{\lambda}\right\}d_{0}d_{1}^{k+1}\exp \lambda z\nonumber-c d_{1}^{k+2}=\frac{-cd_0}{\lambda}\exp \lambda z-cd_1,\nonumber \eea
which shows that
\bea\label{ebbm.11} d_1^{k+1}=1\;\;\text{and}\;\;\left\{(k+1)-c(k+2)/\lambda\right\}d_{0}d_{1}^{k+1}=-cd_0/\lambda,\;\;
\text{i.e.},\;\; c=\lambda.\eea

Therefore from (\ref{ebbm.9}) and (\ref{ebbm.11}), we have 
\bea\label{ebbm.12} \lambda^k=(-1)^{k+1}/(k+1)(k+1)!.\eea

\smallskip
First we suppose $k=1$. Then from (\ref{ebbm.11}) and (\ref{ebbm.12}), we have respectively
$d_1^2=1$ and $c=\lambda=1/4$. Finally from (\ref{ebbm.6}), we get $f(z)=c_0\exp\left(\frac{1}{4} z\right)+d_1$, where $c_0=4d_0$.

\smallskip
Next we suppose $k\geq 2$. If $(a_0,a_1,\ldots, a_{k-1})=(0,0,\ldots,0)$, then from (\ref{ebbm.11})-(\ref{ebbm.12}), we get
\beas\label{ebbm.13} \frac{k+1}{\lambda^{k+1}}-\frac{k+1}{\lambda^{k+1}}\sideset{}{_{i=1}^k}{\sum}a_i((k+1)\lambda)^i-\frac{c}{\lambda^{k+2}}&=&\frac{1}{\lambda}\left(\frac{k+1}{\lambda^{k}}-(k+1)^{k+1}-\frac{c}{\lambda^{k+1}}\right)\\&=&\frac{k+1}{\lambda}\left((-1)^{k+1}k(k+1)!-(k+1)^k\right)\neq 0\nonumber\eeas
for $k\geq 2$ and so using Lemma \ref{l2} to (\ref{ebbm.10}), we get a contradiction.\par
If $(a_0,a_1,\ldots, a_{k-1})\neq(0,0,\ldots,0)$, then $f(z)=\frac{d_0}{\lambda}\exp \lambda z+d_1$, 
where $d_1^{k+1}=1$ and $\lambda^k=(-1)^{k+1}/(k+1)(k+1)!$.\par

\smallskip
{\bf Sub-case 1.2.} Let $N(r,1;F\mid\geq 2)\neq 0$. Now differentiating (\ref{ebbm.4}) twice, we get
\bea\label{ebbm.14a} &&(k+1)^2f^k (f^{(1)})^2+(k+1)f^{k+1}f^{(2)}-(k+1)f^{(1)}\sideset{}{_{i=0}^k}{\sum} a_i (f^{k+1})^{(i+1)}\\&&-(k+1)f^{(2)}\sideset{}{_{i=0}^k}{\sum} a_i (f^{k+1})^{(i)}
-c(k+2)f^{k+1}f^{(1)}=-cf^{(1)}\nonumber\eea
and
\bea\label{ebbm.14} &&(k+1)^2kf^{k-1}(f^{(1)})^3+3(k+1)^2 f^kf^{(1)}f^{(2)}+(k+1)f^{k+1}f^{(3)}
\\&&-(k+1)f^{(3)}\sideset{}{_{i=1}^k}{\sum}a_i(f^{k+1})^{(i)}-2(k+1)f^{(2)}\sideset{}{_{i=1}^k}{\sum}a_i(f^{k+1})^{(i+1)}\nonumber\\&& -(k+1)f^{(1)}\sideset{}{_{i=1}^k}{\sum}a_i(f^{k+1})^{(i+2)}-c(k+2)(k+1)f^k(f^{(1)})^2
-c(k+2)f^{k+1}f^{(2)}=-cf^{(2)}.\nonumber\eea

Therefore using (\ref{ebbm.2})-(\ref{ebbm.3a}) to (\ref{ebbm.14}), we get
\bea\label{ebbm.15}-(k+1)(k+1)!\big((k^2+3k+6)(f^{(1)})^{k+1}f^{(2)}+2a_{k-1}(f^{(1)})^{k+2}\big)+2R_{4}(f)=-2cf^{(2)},\eea
where $R_{4}(f)$ is a differential polynomial in $f$.

Let $z_2$ be a zero of $f$. Now from (\ref{ebbm.15}), we get
\bea\label{ebbm.17} (k^2+3k+6)(f^{(1)}(z_2))^{k+1}f^{(2)}(z_2)+2a_{k-1}(f^{(1)}(z_2))^{k+2}=\big(2c/(k+1)(k+1)!\big)f^{(2)}(z_2).\eea
 
Therefore from (\ref{ebbm.00}) and (\ref{ebbm.17}), we get $(k^2+3k+4)f^{(2)}(z_2)+2a_{k-1}f^{(1)}(z_2)=0$, which shows that \[f=0\Rightarrow (k^2+3k+4)f^{(2)}+2a_{k-1}f^{(1)}=0.\]

Let 
\bea\label{ebbm.18} H_1=\Big(k^2+3k+4)f^{(2)}+2a_{k-1}f^{(1)}\Big)/f.\eea

We now divide following sub-cases.\par

\smallskip
{\bf Sub-case 1.2.1.} Let $H_1\equiv 0$. Then (\ref{ebbm.18}) gives $(k^2+3k+4)f^{(2)}+2a_{k-1}f^{(1)}\equiv 0.$
Let $\hat a=\frac{2a_{k-1}}{k^2+3k+4}$. On integration, we get 
\[f(z)=\hat A_0\exp(-\hat a z)+\hat B_0,\]
where $\hat A_0, \hat B_0\in\mathbb{C}\backslash \{0\}$. Clearly $f^{(1)}\neq 0$. Note that $F-1=f^{k+1}-1$ and so $F^{(1)}=(k+1)f^kf^{(1)}$. Since $f^{(1)}\neq 0$, we get $N(r,1;F\mid \geq 2)=0$, which is impossible.\par

\smallskip
{\bf Sub-case 1.2.2.} Let $H_1\not\equiv 0$. Since $f$ has only simple zeros and $f=0\Rightarrow (k^2+3k+4)f^{(2)}+2a_{k-1}f^{(1)}=0$, 
from (\ref{ebbm.18}), we can say that $H_1(\not\equiv 0)$ is an entire function. Now using Lemma \ref{l1} to (\ref{ebbm.18}), we get $m(r,H_1)=o(\log r)$ and $T(r,H_1)=o(\log r)$, which shows that $H_1$ is a constant, say $\delta\in\mathbb{C}\backslash \{0\}$.
Then from (\ref{ebbm.18}), we get
\bea\label{ebbm.19} f^{(2)}=\alpha f^{(1)}+\beta f,\;\;
\text{where}\;\; \alpha =-\hat a\;\;\text{and}\;\;\beta =\delta/(k^2+3k+4)\neq 0.\eea 

Differentiating (\ref{ebbm.19}) and using it repeatedly, we have
\bea\label{ebbm.20} f^{(i)}=\alpha_{i-1} f^{(1)}+\beta_{i-1} f,\eea
where $i\geq 2$ and $\alpha_{i-1}, \beta_{i-1}\in\mathbb{C}$.
Now using (\ref{ebbm.20}), we can assume from (\ref{ebbm.2}) that
\bea\label{ebbm.21} \sideset{}{_{i=0}^k}{\sum} a_i (f^{k+1})^{(i)}=\check c_0 f(f^{(1)})^k+\check c_1 f^2(f^{(1)})^{k-1}+\ldots+\check c_k f^kf^{(1)}+\check c_{k+1} f^{k+1},\eea
where $\check c_0(=(k+1)!), \ldots,\check c_{k+1}\in\mathbb{C}$. 
Consequently from (\ref{ebbm.4}) and (\ref{ebbm.21}), we have 
\bea\label{ebbm.22} (k+1)\big(\check c_0(f^{(1)})^{k+1}+\ldots+\check c_k f^{k-1}(f^{(1)})^2+(1-\check c_{k+1})f^kf^{(1)}\big)+cf^{k+1}=c.\eea

Now differentiating (\ref{ebbm.21}) and using (\ref{ebbm.19}) and (\ref{ebbm.20}), one can easily assume that
\bea\label{ebbm.23} \sideset{}{_{i=0}^k}{\sum} a_i (f^{k+1})^{(i+1)}=\check d_0 f(f^{(1)})^k+\check d_1 f^2(f^{(1)})^{k-1}+\ldots+\check d_k f^kf^{(1)}+\check d_{k+1} f^{k+1},\eea
where $\check d_0, \ldots,\check d_{k+1}(=\check c_k\beta)\in\mathbb{C}$. 
Therefore using (\ref{ebbm.19}), (\ref{ebbm.21}) and (\ref{ebbm.23}) to (\ref{ebbm.14a}), we get
\bea\label{ebbm.24}\check e_0 (f^{(1)})^{k+2}+\check e_1 f(f^{(1)})^{k+1}+\ldots+\check e_{k+1} f^{k+1}f^{(1)}+\check e_{k+2} f^{k+2}=cf^{(1)},\eea
where $\check e_0=(k+1)(k+1)!$, 
\[\check e_{k+1}=(k+1)(-\alpha+\check d_{k+1}+\alpha\check c_{k+1}+c(k+2))=(k+1)(-\alpha+\check c_k\beta+\alpha\check c_{k+1}+c(k+2))\]
 and  
\[\check e_{k+2}=(k+1)\beta(\check c_{k+1}-1).\]

Let $z_3$ be a multiple zero of $F-1=f^{k+1}-1$. Then $f(z_3)\neq 0$ and $f^{(1)}(z_3)=0$. Clearly from (\ref{ebbm.24}), we get $\check e_{k+2}=0$. Since $\beta\neq 0$, we have $\check e_{k+1}=1$. Consequently from (\ref{ebbm.24}), we have 
\bea\label{ebbm.25}\check e_0 (f^{(1)})^{k+1}+\check e_1 f(f^{(1)})^{k}+\ldots+\check e_{k+1} f^{k+1}=c. \eea

Let $z_3$ be a multiple zero of $F-1=f^{k+1}-1$. Clearly $f^{k+1}(z_3)=1$ and $f^{(1)}(z_3)=0$. Then from (\ref{ebbm.25}) we get $\check e_{k+1}=c$ and so
\bea\label{ebbm.26} (k+1)(\check c_k\beta+c(k+2))=c.\eea

On the other hand from (\ref{ebbm.22}), we have 
\bea\label{ebbm.27} (k+1)\check c_0(f^{(1)})^{k+1}+\ldots+(k+1)\check c_k f^{k-1}(f^{(1)})^2+cf^{k+1}=c.\eea

Now differentiating (\ref{ebbm.27}) and using (\ref{ebbm.19}), one can easily assume that
\bea\label{ebbm.28} \check f_0(f^{(1)})^{k}+\ldots+(k+1)(2\check c_k\beta+c) f^{k}=0.\eea

If $z_3$ is a multiple zero of $F-1$, then from (\ref{ebbm.28}), one can easily conclude that $2\check c_k\beta+c=0$ and so from (\ref{ebbm.26}), we get a contradiction.\par

\smallskip
{\bf Case 2.} Let $\varphi\equiv 0$. Since $F^{(1)}\not\equiv 0$, it follows that $\mathscr{L}_k(F)\equiv F$, i.e.,
\bea\label{k1} F^{(k)}+a_{k-1}F^{(k-1)}+\ldots+a_1F^{(1)}+(a_0-1)F\equiv 0.\eea

Now we consider following two sub-cases.\par

\smallskip
{\bf Sub-case 2.1.} Let $(a_0,a_1, a_2,\ldots, a_{k-1})=(0,0,\ldots,0)$.
Then from (\ref{k1}), we have $F\equiv F^{(k)}$. The fact $F\equiv F^{(k)}$ implies that $\rho(F)=1$, i.e., $\rho(f)=1$. 
Also $F\equiv F^{(k)}$ implies that $f$ has no zeros. Therefore $f(z) = d\exp\left(\frac{\lambda}{n}z\right)$,
where $d, \lambda \in \mathbb{C}\backslash \{0\}$ such that $\lambda^{k} = 1$.\par

\smallskip
{\bf Sub-case 2.2.} Let $(a_0,a_1, a_2,\ldots, a_{k-1})\neq (0,0,\ldots,0)$. 

\smallskip
First suppose $a_0=1$. Since all the zeros of $F$ have multiplicity at least $k+1$, from (\ref{k1}) we conclude that $F$ has no zeros. Therefore we may assume that $f(z)=A\exp \frac{\lambda}{n} z$, where $A, \lambda\in\mathbb{C}\backslash \{0\}$.
Now from (\ref{k1}), we see that $\lambda^{k-1}+a_{k-1}\lambda^{k-2}+\ldots+a_1=0$. \par

\smallskip
Next suppose $a_0\neq 1$. In this case also $F$ has no zeros and so $f(z)=A\exp \frac{\lambda}{n} z$, where $A, \lambda\in\mathbb{C}\backslash \{0\}$ such that $\lambda^k+a_{k-1}\lambda^{k-1}+\ldots+a_1\lambda+a_0=1$.

This completes the proof.
\end{proof}

\begin{proof}[{\bf Proof of Theorem \ref{tt1}}]
Let $F$ be a non-constant meromorphic solution of the equation (\ref{rm.1}).
Now we consider following two cases.\par

\smallskip
{\bf Case 1.} Let $\varphi\not\equiv 0$. Since $\varphi$ is an entire function, from (\ref{rm.1}), we see that $F$ is a non-constant entire function. Now we prove that $F=1\Rightarrow \mathscr{L}_k(F)=1$. If $1$ is a Picard exceptional value of $F$, then obviously $F=1\Rightarrow \mathscr{L}_k(F)=1$. Next we suppose that $1$ is not a Picard exceptional value of $F$. Let $z_0$ be a zero of $F-1$ of multiplicity $p_0$. Clearly $z_0$ is a zero of $F^{(1)}$ of multiplicity $p_0-1$. Therefore from (\ref{rm.1}), we see that $z_0$ must be a zero of $F-\mathscr{L}_k(F)$ and so $z_0$ is a zero of $\mathscr{L}_k(F)-1$. So $F=1\Rightarrow \mathscr{L}_k(F)=1$. Since $\varphi\not\equiv 0$, we get $F\not\equiv \mathscr{L}_k(F)$. Now proceeding in the same way as done in the proof of Case 1 of Theorem \ref{t1}, we get the conclusions $(2)$ and $(3)$.\par

\smallskip
{\bf Case 2.} Let $\varphi\equiv 0$. Since $F^{(1)}\not\equiv 0$, we get $F\equiv \mathscr{L}_k(F)$. We prove that $F$ is an entire function. For this, let $z_1$ be a pole of $F$ of multiplicity $p_1$. Then $z_1$ is also a pole $\mathscr{L}_k(F)$ of multiplicity $p_1+k$. Since $F\equiv \mathscr{L}_k(F)$, we get a contradiction. Hence $F$ is an entire function. Now proceeding in the same way as done in the proof of Case 2 of Theorem \ref{t1}, we get the conclusions $(1)$, $(4)$ and $(5)$.

This completes the proof.
\end{proof}

\begin{proof}[{\bf Proof of Theorem \ref{tt2}}]
Since $\varphi$ is a non-zero entire function, $\varphi\not\equiv 0$. Now the proof of Theorem \ref{tt2} follows directly from the proof of Theorem \ref{tt1}. So we omit the proof.
\end{proof}

\section{{\bf Value cross-sharing of entire function with its derivative}}

Let $f$ and $g$ be two non-constant entire functions and let $n\geq 1$ be a positive integer. If $f^n(z)f^{(1)}(z)$ and $g^n(z)g^{(1)}(z)$ share $0$ CM, then following two possibilities may occur:
\begin{enumerate}
\item[(i)] $f^n(z)$ and $g^{n}(z)$ share $0$ CM, $f^{(1)}(z)$ and $g^{(1)}(z)$ share $0$ CM;
\item[(ii)] $f^n(z)$ and $g^{(1)}(z)$ share $0$ CM, $g^{n}(z)$ and $f^{(1)}(z)$ share $0$ CM.
\end{enumerate}

Of course, there may exist other possibilities.

\smallskip
For the case $(i)$, Gundersen \cite{GGG1} (see also \cite[Theorems 9.2-9.4]{YY1}) considered the relations between $f(z)$ and $g(z)$. On the other hand for the case $(ii)$, the relations between the transcendental entire functions $f(z)$ and $g(z)$ have been considered by Wang and Liu \cite{WL1}. Now we recall the results
 of Wang and Liu \cite{WL1}.
 
\begin{theo3.A}\cite[Theorem 1.1]{WL1} Let $f$ and $g$ be transcendental entire functions such that $f^n$ and $g^{(1)}$ share $0$ CM, $g^n$ and $f^{(1)}$ share $0$ CM. 

If $n=1$, where $f$ and $g$ are of finite order, then $fg=\beta f^{(1)}g^{(1)}$, where $\beta$ is a non-zero constant. Furthermore, $f$ and $g$ may satisfy one of the following two cases:
\begin{enumerate}
\item[(1)] $f=\gamma g$, $g=\exp(\tau z+\nu)$, where $\tau^2=\frac{1}{\beta}$ is a constant and $\gamma $ is a non-zero
constant;
\item[(2)] $f=\lambda \sin(az + b)$ and $g=\gamma \cos(az + b)$, where $a, b, \lambda , \gamma $ are constants with
$a\lambda \gamma=0$ and $\lambda =i\gamma^2$.
\end{enumerate}

If $n\geq 2$, for any transcendental entire functions $f$ and $g$, then
\beas N(r,0;f)+N(r,0;g)=S(r,f)+S(r,g).\eeas
\end{theo3.A}

If $f$ and $g$ are transcendental entire functions of infinite order and $n=1$, then Theorem 3.A is not true. For example, we consider the functions $f(z)=e^{e^z}$ and $g(z)=e^{-e^z}$ (see \cite[Remark 1.3]{WL1}). However for $f(z)=e^{e^z}$ and $g(z)=e^{-e^z}$ the relation $N(r,0;f)+N(r,0;g)=S(r,f)+S(r,g)$ holds.

\begin{theo3.B}\cite[Theorem 1.2]{WL1} Let $f$ and $g$ be transcendental entire functions and $a$ be a non-zero constant. If $f^n$ and $g^{(1)}$ share $a$ CM, $g^n$ and $f^{(1)}$ share $a$ CM, then $n\leq 2$.
\end{theo3.B}

\smallskip
At the end of the paper, Wang and Liu \cite{WL1} posed the following question:
\begin{ques3.A}\cite[Question 4.2]{WL1} Let $f$ and $g$ be transcendental entire functions. What can we get if $f^n$ and $g^{(1)}$ share a non-zero constant $b$ CM, $g^n$ and $f^{(1)}$ share a non-zero constant $a$ CM, where $a$ and $b$ are any constants?
\end{ques3.A}

In general, it is difficult to solve Question 3.A for two transcendental entire functions $f$ and $g$. However for the very special case $g=f^{(1)}$, where $\rho_1(f)<+\infty$, we can solve Question 3.A completely. Now we state our result.

\begin{theo}\label{4.t1} Let $f$ be transcendental entire function such that $\rho_1(f)<+\infty$ and let $a$ and $b$ be non-zero complex numbers. If $f^n$ and $f^{(2)}$ share $a$ CM, $(f^{(1)})^n$ and $f^{(1)}$ share $b$ CM, then $n=1$.
\end{theo}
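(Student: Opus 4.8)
The plan is to prove $n=1$ by contradiction, using the two sharing hypotheses for different purposes.

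\emph{Step 1.} I would first show that the single hypothesis ``$(f^{(1)})^{n}$ and $f^{(1)}$ share $b$ CM'' already forces $n\le 2$, and that $n=2$ forces $b=1$ and $f^{(1)}+1=e^{\alpha}$ for a non-constant entire $\alpha$ of finite order. Put $h=f^{(1)}$, which is transcendental entire since $f$ is. Fix $c\neq 0$ with $c^{n}=b$ and $\omega=e^{2\pi i/n}$, so $h^{n}-b=\prod_{j=0}^{n-1}(h-c\omega^{j})$ with the $c\omega^{j}$ pairwise distinct. Since $h^{n}-b$ and $h-b$ must have precisely the same zeros with the same multiplicities, a point where $h=c\omega^{j}$ can be a zero of $h-b$ only if $c\omega^{j}=b$; hence $h$ omits every $c\omega^{j}\neq b$. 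If $h$ attains $b$ then $b^{n}=b$, so $b^{n-1}=1$, $b$ equals some $c\omega^{j}$, and $h$ omits the other $n-1$ of them; if instead $h$ omits $b$, then $h$ omits at least $n$ finite values. In every case $n\ge 3$ makes $h$ omit two finite values, impossible by Picard, so $n\le 2$. For $n=2$, $b^{n-1}=1$ gives $b=1$ and $h$ omits $-b=-1$, so $f^{(1)}+1$ is a zero-free entire function $e^{\alpha}$ with $\alpha$ non-constant; since $\rho_{1}(f)<+\infty$ and $\rho(\alpha)=\rho_{1}(e^{\alpha})$, $\alpha$ has finite order. Now suppose, for contradiction, $n=2$.

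\emph{Step 2.} I would then exploit that $f^{2}$ is a perfect square. Since $n=2$, $f^{(1)}=e^{\alpha}-1$ and $f^{(2)}=\alpha' e^{\alpha}$. Applying the second fundamental theorem with ramification to $g=f^{2}$ at the values $0,a,\infty$: every zero of $f^{2}$ has even multiplicity, so $(f^{2})'$ vanishes there to order one less and $N(r,0;(f^{2})')\ge N(r,0;f^{2})-\ol{N}(r,0;f^{2})\ge \ol{N}(r,0;f^{2})$ (vacuous if $f$ is zero-free); as $f^{2}$ has no poles this yields $T(r,f^{2})\le \ol{N}(r,a;f^{2})+S(r,f)$. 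The CM-sharing of $f^{2}$ and $f^{(2)}$ at $a$ gives $\ol{N}(r,a;f^{2})\le N(r,a;f^{2})=N(r,a;f^{(2)})\le T(r,f^{(2)})+O(1)$, hence
\[ 2T(r,f)\le T(r,f^{(2)})+S(r,f). \]
But $f^{(2)}=\alpha' e^{\alpha}$ with $\alpha$ of finite order, so $T(r,f^{(2)})\le T(r,\alpha')+T(r,e^{\alpha})=(1+o(1))\,T(r,e^{\alpha})$, and $e^{\alpha}=(f)'+1$ with $f$ entire gives $T(r,e^{\alpha})\le T(r,f)+S(r,f)$. Substituting, $2T(r,f)\le (1+o(1))\,T(r,f)+S(r,f)$, i.e. $T(r,f)\le o(T(r,f))$ as $r\to\infty$ outside a set of finite measure, which is absurd. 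Therefore $n=2$ cannot occur, and $n=1$.

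The omitted-value bookkeeping in Step 1 should be routine. The delicate point will be Step 2: correctly invoking the ramified second fundamental theorem for the square $f^{2}$, and justifying the two estimates $T(r,\alpha')=o(T(r,e^{\alpha}))$ and $T(r,f^{(1)})\le T(r,f)+S(r,f)$ --- this is precisely where ``$f$ entire'' (so $f^{(1)}$ adds nothing to the characteristic via poles) and ``$\rho_{1}(f)<+\infty$'' (so $\alpha$, hence $\alpha'$, is of finite order) are genuinely used. I expect the only case to watch is whether $f$ is zero-free, which merely trivialises the ramification step, and the only technical nuisance to be keeping the various $S(r,\cdot)$ terms and the associated exceptional set consistent throughout.
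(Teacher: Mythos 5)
Your Step 1 is correct, and it is actually cleaner than the paper's corresponding reduction: the factorization $h^{n}-b=\prod_{j}(h-c\omega^{j})$ together with Picard does give $n\le 2$, and for $n=2$ it forces $b=1$ and $f^{(1)}=e^{\alpha}-1$ with $\alpha$ non-constant entire of finite order (the paper arrives at the same normal form $f^{(1)}=e^{Q}-1$, but through a separate case analysis on whether $b$ is a Picard exceptional value of $f^{(1)}$).

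The gap is in Step 2, at the inequality $T(r,f^{2})\le \overline{N}(r,a;f^{2})+S(r,f)$. For the entire function $g=f^{2}$ and the values $0,a,\infty$, the second fundamental theorem with ramification gives $T(r,g)\le N(r,0;g)+N(r,a;g)-N(r,0;g^{(1)})+S(r,g)$, and the most the lower bound $N(r,0;g^{(1)})\ge \bigl[N(r,0;g)-\overline{N}(r,0;g)\bigr]+\bigl[N(r,a;g)-\overline{N}(r,a;g)\bigr]$ can buy is the truncated form $T(r,g)\le \overline{N}(r,0;g)+\overline{N}(r,a;g)+S(r,g)$. The evenness of the zeros of $f^{2}$ lets you replace $\overline{N}(r,0;g)$ by $\overline{N}(r,0;f)\le T(r,f)=\tfrac{1}{2}T(r,g)$, but not by $S(r,f)$: nothing in the hypotheses controls the zeros of $f$, and your bookkeeping silently discards the term $\overline{N}(r,0;f)$. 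What your argument actually yields is $T(r,f)\le \overline{N}(r,a;f^{2})+S(r,f)$ (equivalently, apply the theorem to $f$ with the values $\pm\sqrt{a}$), and then the chain $\overline{N}(r,a;f^{2})=\overline{N}(r,a;f^{(2)})\le T(r,f^{(2)})+O(1)\le (1+o(1))T(r,f)$ gives only $T(r,f)\le (1+o(1))T(r,f)$ --- no contradiction. So $n=2$ is not eliminated; the factor $2$ you need is exactly the unjustified step, and it would require $\overline{N}(r,0;f)=S(r,f)$, which is not available. This is not a small repair: the paper disposes of $n=2$ by setting $(f^{2}-a)/(f^{(2)}-a)=e^{P}$ and $((f^{(1)})^{2}-b)/(f^{(1)}-b)=e^{Q}$ (the hypothesis $\rho_{1}(f)<+\infty$ makes $P,Q$ of finite order), differentiating to get an identity among $e^{Q},e^{2Q},e^{3Q},e^{P},e^{P+Q},e^{P+2Q}$ with small coefficients, and then applying Borel-type lemmas together with a polynomial/transcendental case analysis on $P$ and $Q$; some substitute of this kind is needed where your Step 2 now stands.
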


\begin{rem} Obviously, the function $f(z)=\sin z+\cos z$ satisfies Theorem \ref{4.t1} when $n=1$. However, we can not find all the entire functions $f$ satisfying Theorem \ref{4.t1}.
\end{rem}

\begin{rem} Theorem \ref{4.t1} improves Theorem 3.B for the special case when $g=f^{(1)}$ and $\rho_1(f)<+\infty$.
\end{rem}

\medskip
We need the following lemmas for the proof of Theorem \ref{4.t1}.
\begin{lem}\label{3.L1} \cite[Theorem 1.64]{YY1} Let $f_1,\ldots, f_n$ be non-constant meromorphic functions and $f_{n+1},\ldots, f_{n+m}$ be meromorphic functions such that $f_k\not\equiv 0\;\;(k=n+1,\ldots, n+m)$ and $\sideset{}{_{i=1}^{m+n}}{\sum} f_i\equiv A\in\mathbb{C}\backslash \{0\}$. If there exists a subset $I\subseteq \mathbb{R}^+$ satisfying $\text{mes}\;I=+\infty$ such that
\beas &&\sideset{}{_{i=1}^{m+n}}{\sum} N(r,0;f_i)+(n+m-1)\sideset{}{_{\substack{i=1, i\neq j}}^{m+n}}{\sum} \ol N(r,f_i)\\&<&(\lambda+o(1))T(r,f_j)\;(r\to \infty, r\in I, j=1,2,\ldots,n),\eeas
where $\lambda<1$, then there exist $t_i\in\{0,1\}\;(i=1,2,\ldots,m)$ such that $\sum_{i=1}^mt_if_{n+i}\equiv A$.
\end{lem}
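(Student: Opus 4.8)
The plan is to derive the lemma from the second main theorem for linearly independent meromorphic functions, played off against the margin $\lambda<1$. First I would normalise: setting $g_i=f_i/A$ (legitimate since $A\neq 0$) leaves every $N(r,0;f_i)$ and $\ol N(r,f_i)$ unchanged, alters each $T(r,f_j)$ by only $O(1)$, turns the identity into $\sum_{i=1}^{n+m}g_i\equiv 1$, and turns the target conclusion into $\sum_{i=1}^{m}t_ig_{n+i}\equiv 1$; so I may assume $A=1$. The engine is the following estimate: if $h_1,\dots,h_p$ are linearly independent meromorphic functions with $\sum_{i=1}^{p}h_i\equiv 1$, then --- viewing $(h_1,\dots,h_p)$ as a holomorphic curve and applying Cartan's second main theorem to the coordinate hyperplanes together with $\{w_1+\cdots+w_p=1\}$, or equivalently estimating through the Wronskian $W=W(h_1,\dots,h_p)$ via the logarithmic derivative lemma --- one obtains, for each $j$,
\[ T(r,h_j)\le \sum_{i=1}^{p}N(r,0;h_i)+(p-1)\sum_{\substack{i=1\\ i\neq j}}^{p}\ol N(r,h_i)-N(r,0;W)+S(r), \]
which, with $p=n+m$, is exactly the quantity controlled in the hypothesis, the Wronskian term $-N(r,0;W)\le 0$ being a harmless correction.

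Next I would argue by contradiction, assuming no subsum $\sum_i t_i g_{n+i}$ equals $1$, and run an induction on the number $n+m$ of summands. Consider first the case that $g_1,\dots,g_{n+m}$ are linearly independent over $\mathbb{C}$. Applying the displayed estimate with $p=n+m$ to any fixed non-constant $g_j$ (such a $j\le n$ exists, as $n\ge 1$) gives $T(r,g_j)\le \sum_i N(r,0;g_i)+(n+m-1)\sum_{i\neq j}\ol N(r,g_i)+S(r,g_j)$. By hypothesis the right-hand side is $<(\lambda+o(1))T(r,g_j)$ for $r\in I$ with $\text{mes}\,I=+\infty$; hence $(1-\lambda-o(1))T(r,g_j)\le S(r,g_j)$ along $I$, which is impossible for a non-constant $g_j$. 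This is the decisive use of $\lambda<1$: it is precisely the margin that renders the second-main-theorem inequality self-contradictory. Therefore $g_1,\dots,g_{n+m}$ must be linearly dependent.

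It then remains to convert linear dependence into the stated degeneration. From a minimal vanishing relation $\sum_{i\in J}c_ig_i\equiv 0$ (with all $c_i\neq 0$) I would solve for one $g_{i_0}$, $i_0\in J$, and substitute into $\sum_i g_i\equiv 1$ to obtain an identity $\sum_{i\neq i_0}\tilde g_i\equiv 1$ with strictly fewer summands, where each surviving $\tilde g_i$ is a nonzero constant multiple of $g_i$. Since such a rescaling preserves $N(r,0;\cdot)$ and $\ol N(r,\cdot)$ and changes $T(r,\cdot)$ by only $O(1)$, and since deleting a summand can only decrease the counting-function sums, the reduced identity again satisfies a growth bound of the same shape with the same $\lambda<1$; thus the induction hypothesis applies and yields a subsum equal to $1$. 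Tracing the substitutions back expresses this subsum as a $\{0,1\}$-combination of the original $g_i$, and the independent-case contradiction shows that no non-constant $g_j$ ($j\le n$) can persist in a constant-valued block, so the surviving indices all lie in $\{n+1,\dots,n+m\}$. Undoing $g_i=f_i/A$ then gives $\sum_{i=1}^{m}t_if_{n+i}\equiv A$ with $t_i\in\{0,1\}$, as required.

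The main obstacle is the bookkeeping of this last reduction: one must verify that each step strictly lowers the number of summands while (i) keeping the relevant non-constant $f_j$ present with its Nevanlinna functions untouched, so that the hypothesis is inherited verbatim with an unchanged $\lambda$, and (ii) never pushing a non-constant $f_j$ ($1\le j\le n$) into the terminal constant block --- it is exactly claim (ii) that forces the support of $(t_i)$ into the indices $>n$. A secondary nuisance is that the logarithmic-derivative and Wronskian estimates hold only off a set of finite measure, so at each stage one intersects with $I$ (of infinite measure) to keep the strict inequality alive. Once these points are secured, the impossibility in the independent case together with the induction closes the argument.
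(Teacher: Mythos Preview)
The paper does not prove this lemma at all: it is quoted verbatim as \cite[Theorem~1.64]{YY1} and used as a black box in the proof of Theorem~\ref{4.t1}. So there is no ``paper's own proof'' to compare against, and any correct argument you supply is new content relative to this manuscript.

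As for your sketch itself, the overall architecture --- normalise to $A=1$, use the Wronskian/Cartan second main theorem to force a contradiction in the linearly independent case, then induct downward via a minimal dependence relation --- is indeed the standard route to Borel-type results of this form, and is the strategy behind the proof in \cite{YY1}. The genuine weak point is your ``tracing back'' paragraph. After you substitute $g_{i_0}=-\sum_{i\in J\setminus\{i_0\}}(c_i/c_{i_0})g_i$ into $\sum g_i\equiv 1$, the surviving summands become $\tilde g_i=(1-c_i/c_{i_0})g_i$ for $i\in J\setminus\{i_0\}$ and $\tilde g_i=g_i$ otherwise; the inductive conclusion then hands you $t_i\in\{0,1\}$ with $\sum t_i\tilde g_i\equiv 1$, which is a priori a $\mathbb{C}$-linear (not $\{0,1\}$-linear) combination of the original $g_i$. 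You assert this can be rewritten as a $\{0,1\}$-subsum supported on indices $>n$, but you have not shown why the coefficients collapse to $0$ or $1$, nor why no index $\le n$ survives. The actual argument in \cite{YY1} handles this by a more careful choice of which index to eliminate (always from the ``non-constant'' block when possible) and by tracking that the reduced system still satisfies the hypothesis \emph{for every remaining non-constant index}; one then shows inductively that all of $f_1,\dots,f_n$ eventually get eliminated, at which point the residual identity among the $f_{n+i}$ is analysed directly. Your step~(ii) gestures at this, but as written the implication ``no non-constant $g_j$ can persist in a constant-valued block, so the support lies in $\{n+1,\dots,n+m\}$'' is a restatement of the desired conclusion rather than a proof of it.
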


\begin{lem}\label{3.L2} \cite[Theorem 4]{NO1} Let $f_1,\ldots, f_p$ be $p$ transcendental entire functions such that $\sum_{j=1}^p\lambda_jf_j\equiv 1$, where $\lambda_j\in\mathbb{C}\backslash \{0\}$. Then $\sideset{}{_{j=1}^p}{\sum}\delta (0,f_j)\leq p-1$.
\end{lem}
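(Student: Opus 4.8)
The plan is to prove the equivalent inequality $\sum_{j=1}^p\bigl(1-\delta(0,f_j)\bigr)\geq 1$, after first absorbing the constants. Setting $g_j=\lambda_j f_j$ one has $\sum_{j=1}^p g_j\equiv 1$, and since the zeros of $g_j$ and $f_j$ coincide with multiplicities while $T(r,g_j)=T(r,f_j)+O(1)$, the deficiencies are unchanged, $\delta(0,g_j)=\delta(0,f_j)$. A free observation that simplifies everything is that the $g_j$ can have no common zero: at a common zero the left side of $\sum_j g_j\equiv 1$ would vanish. Hence $(g_1,\dots,g_p)$ is automatically a reduced representation of a holomorphic curve, so no cancellation of a common entire factor has to be worried about. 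I would then split into the linearly independent and the linearly dependent cases.

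The heart of the argument is the independent case, where $g_1,\dots,g_p$ are linearly independent over $\mathbb{C}$. Here I would consider the linearly non-degenerate holomorphic curve $\mathbf{h}=[g_1:\cdots:g_p]\colon\mathbb{C}\to\mathbb{P}^{p-1}$ together with the $p+1$ hyperplanes $H_j=\{x_j=0\}$ $(1\le j\le p)$ and $H_0=\{x_1+\cdots+x_p=0\}$. One checks directly that these are in general position in $\mathbb{P}^{p-1}$, i.e.\ any $p$ of them have empty intersection. Cartan's second main theorem then gives, with $n=p-1$ and $q=p+1$ so that $q-n-1=1$,
\[ T_{\mathbf{h}}(r)\leq \sum_{i=0}^{p}\ol N(r,H_i;\mathbf{h})+S(r) \]
outside a set of finite measure. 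The decisive point is that $H_0$ pulls back to $\sum_j g_j\equiv 1$, whence $\ol N(r,H_0;\mathbf{h})=0$, while $\ol N(r,H_j;\mathbf{h})=\ol N(r,1/g_j)$ for $1\le j\le p$. Since $T(r,g_j)\le T_{\mathbf{h}}(r)+O(1)$ and $\ol N(r,1/g_j)\le N(r,1/g_j)\le T(r,g_j)+O(1)$, dividing by $T_{\mathbf{h}}(r)$ and passing to $\limsup$ yields $1\le \sum_{j=1}^p\limsup_r N(r,1/g_j)/T(r,g_j)=\sum_j\bigl(1-\delta(0,g_j)\bigr)$, which is exactly $\sum_j\delta(0,g_j)\le p-1$.

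To dispose of the linearly dependent case I would pass to a maximal linearly independent subfamily, say $g_1,\dots,g_m$ with $m<p$, and write every $g_l=\sum_{i=1}^m a_{li}g_i$. Substituting into $\sum_l g_l\equiv 1$ produces a relation $\sum_{i=1}^m b_i g_i\equiv 1$ with $b_i=\sum_l a_{li}$ among linearly independent transcendental entire functions. Discarding the indices with $b_i=0$ leaves $\sum_{i:\,b_i\ne 0}b_i g_i\equiv 1$ involving $m'\ (\le m<p)$ functions, to which the independent case applies and gives $\sum_{i:\,b_i\ne 0}\delta(0,g_i)\le m'-1$ (here $m'\ge 2$, since $m'=1$ would force a constant $g_i$, contradicting transcendence). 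Bounding the remaining $p-m'$ deficiencies by the trivial $\delta\le 1$ then yields $\sum_{j=1}^p\delta(0,g_j)\le (m'-1)+(p-m')=p-1$, completing the reduction.

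The main obstacle is precisely the linearly dependent case together with the bookkeeping needed to feed it back into the independent case: one must ensure that eliminating a dependent function never destroys the ``equal to $1$'' normalization in a way that loses control of the discarded deficiencies, and it is the trivial bound $\delta\le 1$ on the leftover terms that makes the count close exactly. Beyond that, the engine of the whole proof is Cartan's second main theorem (equivalently, a Wronskian estimate for $g_1,\dots,g_p$), and the only analytic care required is the comparison $T(r,g_j)\le T_{\mathbf{h}}(r)+O(1)$ and the standard fact that deleting the finite-measure exceptional set does not affect the relevant $\limsup$'s.
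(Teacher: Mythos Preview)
The paper does not give its own proof of this lemma; it is quoted verbatim from \cite[Theorem 4]{NO1} (Niino--Ozawa) and used as a black box in the proof of Theorem \ref{4.t1}. So there is no ``paper's proof'' to compare against.

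Your argument is essentially correct and follows the standard route through Cartan's second main theorem. A couple of small remarks. First, Cartan's inequality in $\mathbb{P}^{p-1}$ with $q=p+1$ hyperplanes yields the truncated counting functions $N^{[p-1]}(r,H_i)$ rather than $\ol N(r,H_i)$; since you immediately dominate these by the full $N(r,1/g_j)$ anyway, this is harmless, but the display as written is slightly stronger than what Cartan actually provides. Second, in the passage from $1\le \sum_j N(r,1/g_j)/T_{\mathbf h}(r)$ to $1\le \sum_j(1-\delta(0,g_j))$ you silently use both $\limsup(\sum)\le\sum(\limsup)$ and the fact that the exceptional set in Cartan's theorem has finite measure (so one can choose a sequence $r_n\to\infty$ avoiding it); both are routine but worth making explicit. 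The reduction in the linearly dependent case is handled correctly: the subfamily $\{g_i:b_i\ne 0\}$ inherits linear independence from $\{g_1,\dots,g_m\}$, and your observation that $m'\ge 2$ (since $m'\le 1$ forces either $0\equiv 1$ or a constant $g_i$) closes the argument.
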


\begin{lem}\label{3.L4} \cite[Theorem 1.51]{YY1} Suppose that $f_1, f_2,\ldots f_n\;(n\geq 2)$ are meromorphic functions and that $g_1, g_2,\ldots, g_n\;(n\geq 2)$ are entire functions satisfying the following conditions:
\begin{enumerate}
\item[(i)] $f_1e^{g_1}+f_2e^{g_2}+\cdots+f_ne^{g_n}=0$;
\item[(ii)] $g_k-g_j$ are non-constants for all $1\leq j<k\leq n$;
\item[(iii)] For $1\leq j\leq n $ and $1\leq h<k\leq n$, $T(r,f_j)=o\left(T\left(r,e^{g_h-g_k}\right)\right)(r\rightarrow +\infty, r\not\in E)$ where $E\subset [1,\infty)$ is a finite linear measure or finite logarithmic measure.
\end{enumerate}
Then $f_j\equiv 0\;(j= 1, 2,\ldots, n).$
\end{lem}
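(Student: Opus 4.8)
The plan is to prove the statement by induction on the number $n$ of exponential terms, reducing an $n$-term identity to an $(n-1)$-term identity of the same type. The single basic computation I will use repeatedly is that $g_k'-g_j'$ is the logarithmic derivative $(e^{g_k-g_j})'/e^{g_k-g_j}$, so that by the lemma on the logarithmic derivative $m(r,g_k'-g_j')=S(r,e^{g_k-g_j})$; since the $g_i$ are entire this also gives $T(r,g_k'-g_j')=S(r,e^{g_k-g_j})$. I will treat the one-term identity $f_1e^{g_1}=0$ as the trivial base, where $f_1\equiv 0$ follows from $e^{g_1}$ being zero-free.

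For the genuine base case $n=2$ I would argue directly. If one of $f_1,f_2$ vanishes identically, so does the other, again because $e^{g_j}$ is zero-free. If both are nonzero, then $f_1e^{g_1}+f_2e^{g_2}=0$ forces $e^{g_1-g_2}=-f_2/f_1$, whence $T(r,e^{g_1-g_2})\le T(r,f_1)+T(r,f_2)+O(1)$. But hypothesis (iii) gives $T(r,f_1)+T(r,f_2)=o(T(r,e^{g_1-g_2}))$ as $r\to\infty$ off $E$, while (ii) makes $e^{g_1-g_2}$ nonconstant, so $T(r,e^{g_1-g_2})\to\infty$; this is a contradiction, and hence $f_1\equiv f_2\equiv 0$.

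For the inductive step I assume the claim for every smaller number of terms and suppose, contrary to the assertion, that $S=\{j:f_j\not\equiv 0\}$ is nonempty. Restricting the identity to $S$ gives $\sum_{j\in S}f_je^{g_j}=0$ with all coefficients nonzero; the case $|S|=1$ is impossible as above, so $|S|\ge 2$. I fix $n_0\in S$, divide by $f_{n_0}e^{g_{n_0}}$, and differentiate; this kills the constant term and yields an $(|S|-1)$-term identity $\sum_{j\in S\setminus\{n_0\}}\tilde f_j\,e^{g_j-g_{n_0}}=0$, where $\tilde f_j=(f_j/f_{n_0})'+(f_j/f_{n_0})(g_j'-g_{n_0}')$. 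The new exponents satisfy $(g_h-g_{n_0})-(g_k-g_{n_0})=g_h-g_k$, so condition (ii) persists. Granting that (iii) also persists, the induction hypothesis forces each $\tilde f_j\equiv 0$; but $\tilde f_j=(f_j/f_{n_0}\cdot e^{g_j-g_{n_0}})'e^{-(g_j-g_{n_0})}$, so $\tilde f_j\equiv 0$ means $f_j/f_{n_0}=C_je^{g_{n_0}-g_j}$ with $C_j\in\mathbb{C}\setminus\{0\}$, i.e. $e^{g_j-g_{n_0}}=C_jf_{n_0}/f_j$. Then $T(r,e^{g_j-g_{n_0}})\le T(r,f_j)+T(r,f_{n_0})+O(1)=o(T(r,e^{g_j-g_{n_0}}))$ by (iii), contradicting the nonconstancy of $e^{g_j-g_{n_0}}$. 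Hence $S=\emptyset$, closing the induction.

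The step I expect to be the main obstacle is the verification that (iii) survives the reduction, namely that $T(r,\tilde f_j)=o(T(r,e^{g_h-g_k}))$ for every surviving pair $h<k$. The factors $f_j/f_{n_0}$ and their derivatives are controlled by $T(r,f_j)+T(r,f_{n_0})$ together with logarithmic-derivative remainders, all of which are $o(T(r,e^{g_h-g_k}))$ by the original (iii). The delicate term is $g_j'-g_{n_0}'$, whose proximity function is a priori only known to be $S(r,e^{g_j-g_{n_0}})$; bounding it uniformly against \emph{every} exponential difference $e^{g_h-g_k}$, rather than only against $e^{g_j-g_{n_0}}$, is the heart of the argument. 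This is precisely what the full strength of (iii) (smallness against all pairs simultaneously, not merely against one distinguished difference) together with the logarithmic-derivative estimates is designed to furnish, and carrying out this bookkeeping carefully is where the real work lies.
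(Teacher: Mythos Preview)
The paper does not prove this lemma at all; it is quoted from \cite[Theorem~1.51]{YY1} and invoked as a black box, so there is no in-paper argument to compare your proposal against.

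Your inductive outline (divide by one nonvanishing term, differentiate to kill the resulting constant, descend to $n-1$ terms) is the classical route to Borel--Steinmetz results of this type, and your treatment of the base case and of the endgame---deducing $e^{g_j-g_{n_0}}=C_jf_{n_0}/f_j$ from $\tilde f_j\equiv 0$ and contradicting (iii)---is correct. However, the obstacle you flag is more than bookkeeping. For an \emph{arbitrary} choice of the eliminated index $n_0$, condition (iii) need not persist: with $g_1=z$, $g_2=2z$, $g_3=e^z$ and constant coefficients $f_j$, eliminating $n_0=3$ produces $\tilde f_1=c_1(1-e^z)$, whose characteristic is $\sim r/\pi$, while the only surviving pair gives $T(r,e^{g_1-g_2})=T(r,e^{-z})\sim r/\pi$; so $T(r,\tilde f_1)$ is not $o(T(r,e^{g_1-g_2}))$ and the induction hypothesis cannot be invoked. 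The claim that hypothesis (iii) together with the logarithmic-derivative lemma ``is designed to furnish'' the required bound for every pair is therefore unjustified as stated.

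The standard repairs are either to choose $n_0$ adaptively (so that, along a suitable unbounded $r$-set, the characteristics $T(r,e^{g_h-g_k})$ for the surviving pairs dominate each $T(r,e^{g_j-g_{n_0}})$), or to replace the naive differentiation by a Wronskian-style elimination as in the Yang--Yi proof. Your sketch is sound in spirit, but it stops precisely at the step that carries the real content.
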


\begin{lem} \label{3.L5}\cite[Theorem 1.56]{YY1} Let $f_j(j=1, 2,\ldots, n)$ be meromorphic functions that satisfy $\sum_{j=1}^3f_j=1$. If $f_1$ is non-constant and 
\[\sideset{}{_{j=1}^3}{\sum}N(r,0;f_j)+2\sideset{}{_{j=1}^3}{\sum}\ol N(r,f_j)<(\lambda+o(1))T(r),\] where $\lambda <1$ and $T(r)=\max_{1\leq j\leq 3}\{(T(r,f_j)\}$. Then $f_2\equiv 1$ or $f_3\equiv 1$.
\end{lem}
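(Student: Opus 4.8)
The plan is to deduce Lemma \ref{3.L5} from the Borel--Nevanlinna type result Lemma \ref{3.L1} (\cite[Theorem 1.64]{YY1}), after choosing with some care which of the three functions to place in the ``non-constant'' slot. First I would record two elementary reductions. Since the counting functions $N(r,0;f_j)$ in the hypothesis are only meaningful when $f_j\not\equiv 0$, we may assume $f_1,f_2,f_3\not\equiv 0$. Next, because $f_1$ is non-constant we have $T(r)=\max_j T(r,f_j)\ge T(r,f_1)\to\infty$; consequently, whichever function realises the maximum at a given large $r$ has characteristic tending to $\infty$ there, so it is non-constant. A nonzero constant among $f_2,f_3$ therefore never realises the maximum and is harmlessly admissible as a coefficient function below.

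The only genuine mismatch is that the hypothesis of Lemma \ref{3.L5} is normalised against $T(r)=\max_j T(r,f_j)$, whereas Lemma \ref{3.L1} demands the growth bound against the characteristic of the particular function assigned to the non-constant slot. To align them, I would partition the set of large $r$ according to which of $T(r,f_1),T(r,f_2),T(r,f_3)$ attains the maximum (breaking ties arbitrarily). These three sets are measurable and their union has infinite measure, so by the pigeonhole principle there is an index $j_0$ and a set $I\subseteq\mathbb{R}^+$ with $\mathrm{mes}\,I=+\infty$ on which $T(r,f_{j_0})=T(r)$; on $I$ the function $f_{j_0}$ is non-constant.

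Now apply Lemma \ref{3.L1} with $A=1$, taking $n=1$ with the single non-constant function equal to $f_{j_0}$ and the remaining two functions as the coefficient functions $f_{n+1},f_{n+2}$ (admissible since each is $\not\equiv 0$). With $n=1,m=2$ we have $n+m-1=2$, so the required growth condition for $j=j_0$ reads
\[\sideset{}{_{i=1}^3}{\sum}N(r,0;f_i)+2\sideset{}{_{i\neq j_0}}{\sum}\ol N(r,f_i)<(\lambda+o(1))T(r,f_{j_0}),\qquad r\in I.\]
This is implied by the hypothesis of Lemma \ref{3.L5}: its left-hand side carries the extra non-negative term $2\ol N(r,f_{j_0})$, hence dominates the left-hand side above, while on $I$ its right-hand side is exactly $(\lambda+o(1))T(r,f_{j_0})$. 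Lemma \ref{3.L1} then produces $t_i\in\{0,1\}$ with $\sum_{i=1}^m t_i f_{n+i}\equiv 1$; that is, some subset of the two coefficient functions sums identically to $1$.

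Finally I would eliminate the degenerate subsets. The empty subset gives $0\equiv 1$, impossible; the full subset forces $f_{j_0}\equiv 1-(\text{sum of the other two})\equiv 0$, contradicting that $f_{j_0}$ is non-constant. Hence a singleton subset works, so one of the two non-maximal functions is identically $1$. If $j_0=1$ this is directly $f_2\equiv 1$ or $f_3\equiv 1$; if $j_0\in\{2,3\}$ the candidates are $f_1$ and the remaining one of $f_2,f_3$, and since the non-constant $f_1$ cannot be $\equiv 1$, the identity must be $f_2\equiv 1$ (when $j_0=3$) or $f_3\equiv 1$ (when $j_0=2$). In every case $f_2\equiv 1$ or $f_3\equiv 1$. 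The step I would be most careful about is precisely the normalisation issue: matching the hypothesis's $T(r)=\max_j T(r,f_j)$ to the single-function characteristic demanded by Lemma \ref{3.L1}, which the adaptive choice of $j_0$ on an infinite-measure subset resolves.
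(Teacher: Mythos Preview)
The paper does not give a proof of this lemma; it is simply quoted from \cite[Theorem~1.56]{YY1} as a known result, so there is no ``paper's own proof'' to compare against. Your reduction to Lemma~\ref{3.L1} (Theorem~1.64 of the same reference) is correct: with $n=1$ and $m=2$ the coefficient $n+m-1=2$ matches the factor in the hypothesis, the pigeonhole selection of an infinite-measure set $I$ on which $T(r,f_{j_0})=T(r)$ aligns the two growth normalisations, and the elimination of the degenerate subset-sums $(t_1,t_2)=(0,0)$ and $(1,1)$ is exactly right. The small preliminary point --- that the ``coefficient'' slots in Lemma~\ref{3.L1} require $f_k\not\equiv 0$ --- is covered by your opening remark that $N(r,0;f_j)$ in the hypothesis presupposes $f_j\not\equiv 0$. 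This is in fact the standard way Theorem~1.56 is obtained from the more general Theorem~1.64 in Yang--Yi, so your argument is the expected one.
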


\medskip
\begin{proof}[{\bf Proof of Theorem \ref{4.t1}}]
By the given conditions, $f^n$ and $f^{(2)}$ share $a$ CM and $(f^{(1)})^n$ and $f^{(1)}$ share $b$ CM. Then there exists two entire functions $P$ and $Q$ such that 
\bea\label{xt1.1}\frac{f^n-a}{f^{(2)}-a}=e^P\eea
and  
\bea\label{xt1.2} \frac{(f^{(1)})^n-b}{f^{(1)}-b}=e^Q.\eea

Now from (\ref{xt1.1}) and (\ref{xt1.2}), we get $T(r,\exp(P))=O(T(r,f))$ and $T(r,\exp(Q))=O(T(r,f))$ respectively. Therefore $\rho_1(\exp(P))\leq \rho_1(f)$ and $\rho_1(\exp(Q))\leq \rho_1(f)$. By the given condition, we have $\rho_1(f)<+\infty$. Consequently $\rho(P)=\rho_1(\exp(P))<+\infty$ and $\rho(Q)=\rho_1(\exp(Q))<+\infty$. Hence $P$ and $Q$ are finite order transcendental entire functions.  

Since $f^n$ and $f^{(2)}$ share $a$ CM, by the second fundamental theorem, we get
\beas\label{t11.3} nT(r,f)=T(r,f^n)&\leq& \ol N(r,f^n)+\ol N(r,0;f^n)+\ol N(r,a;f^n)+S(r,f^n)\nonumber\\
&=& \ol N(r,0;f)+\ol N(r,a;f^{(2)})+S(r,f)\nonumber\\
&\leq & T(r,f)+T(r,f^{(2)})+S(r,f)\nonumber\\
&=& m(r,f)+m(r,f^{(2)})+S(r,f)\nonumber\\
&\leq &m(r,f)+m\left(r,\frac{f^{(2)}}{f}\right)+m(r,f)+S(r,f)\nonumber\\
&=&2T(r,f)+S(r,f),\eeas
i.e., $(n-2)T(r,f)=S(r,f)$.
If $n\geq 3$, then we get a contradiction. Hence $n\leq 2$.\par 

Now we want to prove that $n=1$. Suppose on the contrary that $n=2$.

We consider following two cases.\par

\smallskip
{\bf Case 1.} Let $b$ be a Picard exceptional value of $f^{(1)}$. Then we assume that $f^{(1)}=\exp(Q_1)+b$, where $Q_1$ is a non-constant entire function. Since $(f^{(1)})^2$ and $f^{(1)}$ share $b$ CM, $b$ is also a Picard exceptional value of $(f^{(1)})^2$ and so we may assume that $(f^{(1)})^2=\exp(Q_2)+b$, where $Q_2$ is a non-constant entire function. Consequently we have 
\bea\label{xt1.10} \exp(2Q_1)+2b\exp(Q_1)-\exp(Q_2)+b^2-b=0.\eea

\smallskip
Let $b\neq 1$. Then (\ref{xt1.10}) yields $\sum_{j=1}^3f_j=1$, where 
\[f_1=\frac{1}{b^2-b}\exp(2Q_1),\;f_2=\frac{2}{b^2-b}\exp(Q_1)\;\text{and}\;f_3=-\frac{1}{b^2-b}\exp(Q_2).\]

Note that $f_1$, $f_2$ and $f_3$ are non-constants and 
\[\sideset{}{_{j=1}^3}{\sum} N(r,0;f_j)+2\sideset{}{_{j=1}^3}{\sum}\ol N(r,f_j)=o(T(r)),\]
where $T(r)=\max_{1\leq j\leq 3}\{T(r,f_j)\}$. Then by Lemma \ref{3.L5}, we get a contradiction.\par

\smallskip
Let $b=1$. Then from (\ref{xt1.10}), we get $\exp(Q_1)(\exp(Q_1)+2b)=\exp(Q_2)$, which shows that $N(r,-2b;\exp(Q_1))=0$. Then by the second fundamental theorem we get $T(r,\exp(Q_1))=S(r,\exp(Q_1))$ which is impossible.\par

\smallskip
{\bf Case 2.} Let $b$ be not the Picard exceptional value of $f^{(1)}$. 
Let $z_0$ be zero of $f^{(1)}-b$. Since $(f^{(1)})^2$ and $f^{(1)}$ share $b$ CM, we have $f^2(z_0)=b$. Therefore $b^2-b=0$, i.e., $b=1$. Then from (\ref{xt1.2}), we get $f^{(1)}=\exp(Q)-1$. Now differentiating (\ref{xt1.1}) and using $f^{(1)}=\exp(Q)-1$, we get 
\bea\label{xt1.12} f=\frac{((Q^{(1)})^2+P^{(1)}Q^{(1)}+Q^{(2)})\exp(P+Q)-aP^{(1)}\exp(P)}{2(\exp(Q)-1)}.\eea

Again differentiating (\ref{xt1.12}) and using $f^{(1)}=\exp(Q)-1$, we get 
\bea\label{xt1.13} f^{(1)}=\frac{b_1\exp(P+2Q)+b_2\exp(P+Q)+a((P^{(1)})^2+P^{(2)})\exp(P)}{2(\exp(Q)-1)^2},\eea
where 
\beas b_1 &=& P^{(1)}(Q^{(1)})^2+2Q^{(1)}Q^{(2)}+(P^{(1)})^2Q^{(1)}+2P^{(1)}Q^{(2)}+P^{(2)}Q^{(1)}+Q^{(3)}\\
b_2 &=& -\Big((Q^{(1)})^3+P^{(1)}(Q^{(1)})^2+(P^{(1)})^2Q^{(2)}+2P^{(1)}Q^{(2)}+3Q^{(1)}Q^{(2)}+P^{(1)}(Q^{(1)})^2\\&&+P^{(2)}Q^{(1)}+Q^{(3)}+a(P^{(2)}+(P^{(1)})^2)-aP^{(1)}Q^{(1)}\Big).\eeas

Now putting $f^{(1)}=\exp(Q)-1$ into (\ref{xt1.13}), we get 
\bea\label{xt1.13} &&2\exp(3Q)-6\exp(2Q)+6\exp(Q)-a((P^{(1)})^2+P^{(2)})\exp(P)\\&&+b_1\exp(P+2Q)+b_2\exp(P+Q)=1.\nonumber\eea

If $(b_1,b_2)\equiv (0,0)$, then applying Lemma \ref{3.L2} to (\ref{xt1.13}), we get a contradiction. Hence $(b_1,b_2)\not\equiv (0,0)$.
Since $P$ and $Q$ are finite order transcendental entire functions, it follows that $\rho(b_j)<+\infty$ for $j=1,2$.

\smallskip
Suppose $P$ is transcendental while $Q$ is a polynomial. Since $f^{(1)}=\exp(Q)+1$, we have $\rho(f)=\rho(f^{(1)})=\deg(Q)$. On the other hand from (\ref{xt1.1}), we have $T(r,\exp(P))\leq 3T(r,f)+S(r,f)$, which implies that $\rho(\exp(P))\leq \rho(f)$. Therefore we get a contradiction.\par

\smallskip
Suppose $Q$ is transcendental while $P$ is a polynomial. Clearly $T(r,P^{(i)})=S(r,\exp(Q))$ and $T(r,Q^{(i)})=S(r,\exp(Q))$, where $i$ is any positive integer. Consequently $T(r,b_j)=S(r,\exp(Q))$ for $j=1,2$. Now applying Lemma \ref{l2} to (\ref{xt1.13}), we get $3T(r,\exp(Q))=S(r,\exp(Q))$, which is impossible.\par

Hence either both $P$ and $Q$ are transcendental entire functions or both $P$ and $Q$ are polynomials.

Now we divide following two sub-cases.\par

\smallskip
{\bf Sub-case 2.1.} Let $P$ and $Q$ be both transcendental entire functions. Set $h_1=2\exp(3Q)$, $h_2=-6\exp(2Q)$, $h_3=6\exp(Q)$, $h_4=-a((P^{(1)})^2+P^{(2)})\exp(P)$, $h_5=b_1\exp(P+2Q)$ and $h_6=b_2\exp(P+Q)=1$. Then from (\ref{xt1.13}), we get $\sum_{j=1}^6 h_j=1$. Note that 
\[\sideset{}{_{i=0}^6}{\sum} N(r,0;h_i)+3\sideset{}{_{\substack{i=1,i\neq j}}^6}{\sum}\ol N(r,h_i)<(\lambda+o(1))T(r,h_j)\;\;(r\to \infty, r\in I, j=1,2,3,4),\]
$\lambda<1$. Then by Lemma \ref{3.L1}, there exist $t_i\in\{0,1\}, i=1,2$ such that $t_1h_5+t_2g_6=1$, i.e.,
\bea\label{xt1.14} b_1t_1e^{P+2Q}+b_2t_2e^{P+Q}=1.\eea 

Clearly from (\ref{xt1.14}), we get $(b_1t_1,b_2t_3)\not\equiv (0,0)$. Let $b_1t_1\equiv 0$. Then $t_2=1$ and so from (\ref{xt1.14}), we get $b_2e^{P+Q}=1$, which implies $P+Q$ is a polynomial, say $p_1$. Now from (\ref{xt1.13}), we get 
\bea\label{xt1.15} &&2\exp(4Q)-6\exp(3Q)+(6+b_1\exp(p_1))\exp(2Q)+(b_2\exp(p_1)-1)\exp(Q)\nonumber\\&=&a((P^{(1)})^2+P^{(2)})\exp(p_1).\eea
 
Now applying Lemma \ref{l2} to (\ref{xt1.15}), we get $T(r,\exp(Q))=S(r,\exp(Q))$, which is a contradiction.
Let $b_2t_2\equiv 0$. Then $t_1=1$ and so from (\ref{xt1.14}), we get $b_1\exp(P+2Q)=1$, which implies $P+2Q$ is a polynomial, say $p_2$. Now from (\ref{xt1.13}) ,we get 
\beas\label{xt1.16} &&2\exp(5Q)-6\exp(4Q)+6\exp(3Q)+(b_1\exp(p_2)-1)\exp(2Q)+b_2\exp(p_2)\exp(Q)\\&=&a((P^{(1)})^2+P^{(2)})\exp(p_2)\eeas 
and so applying Lemma \ref{l2} we get $T(r,\exp(Q))=S(r,\exp(Q))$, which is a contradiction.\par

Hence $b_1t_1\not\equiv 0$ and $b_2t_2\not\equiv 0$ and so $t_1=t_2=1$. Then (\ref{xt1.14}) yields $\exp(P+Q)(b_1\exp(Q)+b_2)=1$, which shows that $N\big(r,-\frac{b_2}{b_1};\exp(Q))=S(r,\exp(Q))$. Then by the second fundamental theorem for small function (see \cite{KY1}) we get a contradiction.\par
{\bf Sub-case 2.2.} Let $P$ and $Q$ be both polynomials. Let $P(z)=a_sz^s+a_{s-1}z^{s-1}+\cdots+a_0$ and $Q(z)=b_tz^t+b_{t-1}z^{t-1}+\cdots+b_0$. \par

Let $\deg(P)>\deg(Q)$. Then $T(r,\exp(Q))=S(r,\exp(P))$. Also from (\ref{xt1.13}), we get
\beas &&\Big(b_1\exp(2Q)+b_2\exp(Q)-a((P^{(1)})^2+P^{(2)})\Big)\exp(P)\\&&+2\exp(3Q)-6\exp(2Q)+6\exp(Q)-1=0,\eeas
which shows that $b_1\exp(2Q)+b_2\exp(Q)-a((P^{(1)})^2+P^{(2)})=0$ and $2\exp(3Q)-6\exp(2Q)+6\exp(Q)-1=0$. Now by Lemma \ref{l2}, we get $T(r,\exp(Q))=S(r,\exp(Q))$. Therefore $Q$ is a constant, say $c$. So $f'=\exp(c)+1$, i.e., $f$ is a polynomial, which gives a contradiction.\par
If $\deg(P)<\deg(Q)$, then proceeding in the same way as done above, we get a contradiction.
Hence $\deg(P)=\deg(Q)$. 

We divide following sub-cases. \par

\smallskip
{\bf Sub-case 2.2.1.} Let $b_1\not\equiv 0$ and $b_2\equiv 0$. Then both $P$ and $Q$ are non-constants and so from (\ref{xt1.13}), we get 
\bea\label{xt1.18}&& 2\exp(3Q)-6\exp(2Q)+6\exp(Q)-a((P^{(1)})^2+P^{(2)})\exp(P)+b_1\exp(P+2Q)=1.\eea 

Now we divide following two sub-cases.\par

\smallskip
{\bf Sub-case 2.2.1.1.} Let $a_s+2b_s\neq 0$. Since $P$ is non-constant, it follows that $(P^{(1)})^2+P^{(2)}\not\equiv 0$.
Set $F_1=\exp(3Q)$, $F_2=\exp(2Q)$, $F_3=-a((P^{(1)})^2+P^{(2)})\exp(P)$ and $F_4=b_1\exp(P+2Q)$. Note that $\delta(0,F_j)=1$ and so $\sum_{j=1}^4\delta(0,F_j)=4$. Now using Lemma \ref{3.L2} to (\ref{xt1.18}), we get a contradiction.\par

\smallskip
{\bf Sub-case 2.2.1.2.} Let $a_s+2b_s=0$. Then from (\ref{xt1.18}), we get 
\bea\label{t1.19} g_1\exp(3Q)+g_2\exp(2Q)+g_3\exp(Q)+g_4\exp(P)+g_5\exp(0)=0,\eea
where $g_1=2, g_2=-6, g_3=6, g_4=-a((P^{(1)})^2+P^{(2)}), g_5=b_1\exp(P+2Q)-1$. Note that 
\beas c_j=o(T(r,\exp Q))=o(T(r,\exp 2Q))=o(T(r,\exp(P-Q)))=\cdots=o(T(r,\exp(P-2Q))),\eeas
$j=1,\ldots,5$. Now using Lemma \ref{3.L4} to (\ref{t1.19}), we get $c_1=0$, which is impossible.\par

\smallskip
{\bf Sub-case 2.2.2.} Let $b_1\equiv 0$ and $b_2\not\equiv 0$. Then both $P$ and $Q$ are non-constants and so from (\ref{xt1.13}), we get 
\bea\label{xt1.20} &&2\exp(3Q)-6\exp(2Q)+6\exp(Q)-a((P^{(1)})^2+P^{(2)})\exp(P)+b_2\exp(P+Q)=1.\eea 

Now we divide following two sub-cases.\par

\smallskip
{\bf Sub-case 2.2.2.1.} Let $a_s+b_s\neq 0$. Set $F_1=\exp(3Q)$, $F_2=\exp(2Q)$, $F_3=-a((P^{(1)})^2+P^{(2)})\exp(P)$, $F_5=b_1\exp(P+Q)$. Note that $\delta(0,F_j)=1$ and so $\sum_{j=1}^4\delta(0,F_j)=4$. Now using Lemma \ref{3.L2} to (\ref{xt1.20}), we get a contradiction.\par

\smallskip
{\bf Sub-case 2.2.2.2.} Let $a_s+b_s=0$. Then from (\ref{xt1.20}), we have 
\bea\label{t1.21} g_1\exp(3Q)+g_2\exp(2Q)+g_3\exp(Q)+g_4\exp(P)+g_6\exp(0)=0,\eea
where $g_1=2, g_2=-6, g_3=6, g_4=-a((P^{(1)})^2+P^{(2)}), g_6=b_2\exp(P+Q)-1$. 
Now proceeding in the same way as done in Sub-case 2.2.1.2, we get a contradiction.\par

\smallskip
{\bf Sub-case 2.2.3.} Let $b_1\not\equiv 0$ and $b_2\not\equiv 0$. 

If both $P+Q$ and $P+2Q$ are non-constants, then using Lemma \ref{3.L2} to (\ref{xt1.13}), we get a contradiction. Hence at least one of $P+Q$ and $P+2Q$ is a constant.\par

\smallskip
Let $P+Q$ be a constant, say $c_1$. Then from (\ref{xt1.13}), we have 
\beas &&2\exp(4Q)-6\exp(3Q)+(6+b_1\exp(c_1))\exp(2Q)+b_2\exp(c_1)\exp(Q)\\&=&a((P^{(1)})^2+P^{(2)})\exp(c_1)\eeas
and so by Lemma \ref{l2}, we get a contradiction. Again if $P+2Q=c_2$, a constant, then from (\ref{xt1.13}), we get 
$2\exp(5Q)-6\exp(4Q)+6\exp(3Q)+(b_1\exp(c_2)-1)\exp(2Q)+b_2\exp(c_2)\exp(Q)=a((P^{(1)})^2+P^{(2)})\exp(c_2)$
and so by Lemma \ref{l2}, we get a contradiction. 

Hence $n=1$. 

This completes the proof.
\end{proof}

\section{{\bf Statements and declarations}}
\vspace{1.3mm}

\noindent \textbf {Conflict of interest:} The authors declare that there are no conflicts of interest regarding the publication of this paper.\vspace{1.5mm}

\noindent{\bf Funding:} There is no funding received from any organizations for this research work.
\vspace{1.5mm}

\noindent \textbf {Data availability statement:}  Data sharing is not applicable to this article as no database were generated or analyzed during the current study.

\end{document}